\theoremstyle{plain}
\newtheorem{cor}{Corollary}
\newtheorem{theo}[cor]{Theorem}
\newtheorem{lemma}[cor]{Lemma}
\newtheorem{prop}[cor]{Proposition}
\theoremstyle{definition}
\newtheorem{remark}[cor]{Remark}
\numberwithin{cor}{section}
\numberwithin{equation}{section}
\newcommand{\R}{\mathbb R}
\def\C{\mathcal{C}}
\def\eps{\varepsilon}
\newcommand{\rn}{\rbig^n}
\newcommand{\rbig}{{\mathbb{R}}}
\newcommand{\re}[1]{(\ref{#1})}
\newcommand{\begeqa}{\begin{eqnarray}}
\newcommand{\eneqa}{\end{eqnarray}}
\newcommand{\begeqaet}{\begin{eqnarray*}}
\newcommand{\eneqaet}{\end{eqnarray*}}
\newcommand{\beeq}{\begin{equation}}
\newcommand{\eeq}{\end{equation}}
\newcommand{\beeqs}{\begin{equation*}}
\newcommand{\eeqs}{\end{equation*}}
\newcommand{\lst}{{s_\star}}
\newcommand{\xun}{x_1}
\newcommand{\dom}{\partial \Omega}
\newcommand{\tlam}{T_s}
\newcommand{\tmu}{T_\mu}
\newcommand{\dlam}{D_s}
\newcommand{\wlst}{w_{\lst}}
\newcommand{\tlst}{T_{\lst}}
\newcommand{\slst}{\Sigma_\lst}
\newcommand{\slam}{\Sigma_s}
\newcommand{\wlam}{w_s}
\newcommand{\xlam}{x^s}
\newcommand{\ov}{\overline}
\newcommand{\lsc}{\langle}
\newcommand{\rsc}{\rangle}
\newcommand{\ep}{\varepsilon}
\newcommand{\dist}{\mathrm{dist}}
\begin{document}
\title[Overdetermined problems for fully nonlinear elliptic operators]{Overdetermined problems for fully nonlinear elliptic equations}

\author[L. Silvestre]
{ Luis Silvestre}
\author[B. Sirakov]
{Boyan Sirakov }
\
\date{}
\address{L. Silvestre\hfill\break\indent
University of Chicago\hfill\break\indent
Department of Mathematics\hfill\break\indent
5734 S. University Avenue\hfill\break\indent
Chicago, Illinois 60637\hfill\break\indent
USA}
\email{{\tt  luis@math.uchicago.edu}}

\address{B. Sirakov\hfill\break\indent
Pontificia Universidade Cat\'olica do Rio de Janeiro (PUC-Rio)\hfill\break\indent
Departamento de Matem\'atica\hfill\break\indent
Rua Marques de S\~{a}o Vicente, 225, G\'avea\hfill\break\indent
Rio de Janeiro - RJ, CEP 22451-900,\hfill\break\indent
Brasil
}\email{{\tt bsirakov@mat.puc-rio.br}}

%%%%%%%%0.Abstract

\begin{abstract}
We prove that the existence of a solution to a fully nonlinear elliptic equation in a bounded domain $\Omega$ with an overdetermined boundary condition prescribing both Dirichlet and Neumann constant data forces the domain $\Omega$ to be a ball. This is a generalization of Serrin's classical result from 1971.
\end{abstract}

\keywords{overdetermined elliptic PDE, moving planes, symmetry}

\maketitle

%%%%%%%%%%%%%%%%%%%%%%%%%%%%%%%%%%%%%%%%%%%%%%%%%%%%%%%%%%%%%%%%%%%%%%%%%%%%%%%%%%%%%%%%%%%%%%%%%%%%%%%%
%%%%%%%%%%%%%1.Introducci\'{o}n
%%%%%%%%%%%%%%%%%%%%%%%%%%%%%%%%%%%%%%%%%%%%%%%%%%%%%%%%%%%%%%%%%%%%%%%%%%%%%%%%%%%%%%%%%%%%%%%%%%%%%%%%
\section{Introduction}
This paper is a contribution to the study of overdetermined boundary-value problems for elliptic PDE, started by the celebrated paper of Serrin \cite{Se}.
We will be interested in  {\it fully nonlinear} equations such as
\beeq\label{princ}
\left\{
\begin{array}{rclcl}
F(D^2u, |D u|) +f(u)&=&0&\mbox { in }& \Omega\\
u&>&0&\mbox { in }& \Omega\\
u&=&0&\mbox{ on }& \partial\Omega,\\
|D u|&=&c_0&\mbox{ on }& \partial\Omega.
\end{array}
\right.
\eeq
Throughout the paper $f$ is a locally Lipschitz continuous function on $\R_+=[0,\infty)$, $c_0\in \R_+$, $F$ is a function on $\S_n\times \R_+$, where  $\S_n$ is the space of symmetric $n\times n$ matrices, $F(0,0)=0$, and $\Omega$ is a bounded domain in $\rn$, $n\ge2$, with $C^{2,\gamma}$-smooth boundary, for some $\gamma>0$. The equation \re{princ} is understood in the viscosity sense.

Serrin's theorem states that if $F(D^2u, |D u|)$ is the Laplacian (or $F$ is replaced by a member of a class of more general quasilinear operators), and a classical solution of \re{princ} exists, then $\Omega$ is a ball.  A very large number of extensions of Serrin's theorem can be found in the literature, and  recent years have seen an explosion of works on overdetermined elliptic problems. It is virtually impossible to give a full bibliography, we refer to \cite{AM, BD, BNST, BH, BK, CS, CH, En, ES, FK, FV1, FV2, FG, FGK, GL, HHP, MS, Pr, Ra, Re1, Re2, Si, We} and the references in these papers, for related symmetry results for various degenerate operators, in various geometries, and a variety of methods of proof. Below we discuss in more detail how our results compare to the previous works which deal with fully nonlinear equations.

Our goal is to extend Serrin's result to general second-order fully nonlinear operators. The first assumption that we make is that the equation is rotationally invariant, that is, $F$ is a function only of the eigenvalues of $D^2u$ and the length of $Du$. This is a natural and necessary assumption if we expect to obtain a radial symmetry result for the solution. In other words, we assume that $F$ is a Hessian operator, that is
\begin{itemize}
\item[(H1)] $F(Q^tMQ,p)=F(M,p)$ for each orthogonal matrix $Q$ and  $M\in \S_n$, $p\in \R_+$.
\end{itemize}

%We are going to assume that  \re{princ} has a viscosity solution $u$ which belongs to the H\"older space $ C^{1,\alpha}(\Omega)\cap C^{2,\alpha}(\Omega_\delta)$ for some fixed $\alpha, \delta>0$, where $\Omega_\delta=\{x\in\Omega\::\: \mbox{dist}(x,\partial\Omega)<\delta\}$, and that  $F$ is uniformly elliptic on $u$, in the following sense:

 In most of the results below we will also assume that $F$ is uniformly elliptic and Lipschitz continuous on $\S_n\times \R_+$, in the following sense
\begin{itemize}
 \item[(H2)] there exist numbers $\Lambda\ge\lambda>0$, $k\ge0$,  such that for any $A,B\in \S_n$, $p,q\in \R_+$,
     \beeq\label{elip}
 \mathcal{M}_{\lambda, \Lambda}^+(A-B) +k|p-q| \ge  F(A,p)-F(B,q)\ge \mathcal{M}_{\lambda, \Lambda}^-(A-B)-k|p-q|.
     \eeq
 \end{itemize}
We denote with $\mathcal{M}_{\lambda, \Lambda}^\pm(M)$  the extremal Pucci operators, and recall that, if $M$ is a symmetric matrix with eigenvalues $\mu_1,\ldots,\mu_n$, then
\beeq
\mathcal{M}_{\lambda, \Lambda}^-(M) = \lambda\sum_{\mu_k>0} \mu_k +\Lambda\sum_{\mu_k<0} \mu_k,\qquad
\mathcal{M}_{\lambda, \Lambda}^+(M) = \Lambda\sum_{\mu_k>0} \mu_k +\lambda\sum_{\mu_k<0} \mu_k.
\eeq

There are some degenerate equations of interest which do not satisfy (H2) a priori, but they do if we restrict the choices of $A, B,p,q$  to the Hessians and gradients $D^2u(x)$, $|D u(x)|$, for particular solutions $u$ and $x \in \Omega$. An alternative hypothesis to (H2) is

\begin{itemize}
 \item[(H$2)^\prime$] The equation \re{princ} has a viscosity solution $u$ which belongs to the H\"older space $ C^{2,\alpha}(\Omega)$ for some fixed $ \alpha>0$,  and there exist numbers $\Lambda\ge\lambda>0$, $k\ge0$, such that \re{elip} holds for all $M,N\in D^2u(\Omega)$, and all $p,q\in  |D u|(\Omega)$.
 \end{itemize}

This condition may be satisfied for particular solutions $u$ of equations that otherwise do not satisfy (H2). This is the case for instance if $-u$ is a strictly convex solution of the Monge-Ampere equation, or more generally, if it is a $k$-convex solution of $$S_k(D^2u) = 1,$$ where $S_k(M)$ is the $k$-th symmetric polynomial evaluated at the eigenvalues of $M$ (see for instance \cite{BNST}). Thus Theorem \ref{theoc1} below applies to such equations too. It is a  common trick for elliptic PDE that one can modify the values of $F$ arbitrarily outside the set of values of $D^2 u(\Omega)\times D u(\Omega)$, to make it satisfy the uniform ellipticity condition (H2).

Observe that we made a regularity assumption on $u$ in  $(H2)^\prime$ but not in (H2). This is because, as we will see in the course of the paper, (H2) actually implies that the viscosity solution of \re{princ} is regular enough for our arguments to apply.

We are going to use Alexandrov-Serrin's original method of moving planes. The main difficulty in applying this method to fully nonlinear equations lies in the application of a crucial ingredient of Serrin's proof, the so-called "corner lemma" (see Lemmas 1 and 2 in \cite{Se}). This lemma is essentially linear. Under some conditions, it is possible to apply it to the linearization of the equation, as was already suggested in Serrin's original paper as a mean to study quasi-linear equations. However, in general the corner lemma fails for nonlinear equations. For instance, if $F$ is a minimal Pucci operator with $\lambda < \Lambda$, it follows from \cite{ASS} (see section \ref{sect-perturb} below) that the equation
$$
\mathcal{M}_{\lambda, \Lambda}^-(D^2w) = 0
$$
 has a  solution which is positive inside and vanishes on the boundary of the intersection of two half-spaces with orthogonal normals, and $w$ is homogeneous of order $2+\alpha$, where $\alpha>0$ if $\lambda< \Lambda$. This of course implies that  $w$ has a zero of order two at the corner points, and the classical corner lemma fails.
\medskip

Our first theorem settles the symmetry question for the general equation \re{princ}, with the only extra hypothesis that the operator $F(M,p)$ be  continuously differentiable in the matrix $M$.
Note that the uniform ellipticity hypothesis (H2) implies that $F$ is Lipschitz continuous but not necessarily $C^1$.

\begin{theo}\label{theoc1} Assume (H1), (H2) or $(H2)^\prime$, and that $F(M,p)$ is  continuously differentiable in  $M$. If there exists  a viscosity solution $u$ of \eqref{princ}, then $\Omega$ is a ball and $u$ is radial.
\end{theo}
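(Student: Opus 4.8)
The plan is to run the Alexandrov–Serrin moving planes method, where the only genuinely new ingredient needed is a replacement for Serrin's corner lemma that is compatible with fully nonlinear operators. First I would fix a direction, say $e_1$, and for $s$ in the relevant range let $T_s=\{x_1=s\}$ be the moving hyperplane, $\Sigma_s$ the cap of $\Omega$ on the side $x_1>s$, and $\Sigma_s'$ its reflection across $T_s$. Write $w_s(x)=u(x)-u(x^s)$ where $x^s$ is the reflection of $x$. Since (H1) guarantees that $F$ is invariant under the reflection (a particular orthogonal transformation), the reflected function $u(x^s)$ solves the same equation, and by the uniform ellipticity in (H2)/(H2)$'$ the difference $w_s$ satisfies a linear uniformly elliptic inequality with bounded (measurable) coefficients — here one uses that $F$ is Lipschitz in $M$ and $|Du|$, writing $F(D^2u,|Du|)-F(D^2u(\cdot^s),|Du(\cdot^s)|)$ as $\mathrm{tr}(A(x)D^2w_s)+b(x)\cdot Dw_s$ along the segment joining the two arguments, and absorbing $f(u)-f(u(\cdot^s))$ via the Lipschitz bound on $f$ into a zeroth-order term of the right sign after noting $w_s\le 0$ near $T_s$. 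The regularity remark in the excerpt — that (H2) already forces $u\in C^{2,\alpha}$ — lets me treat $A(x),b(x)$ as honest bounded coefficients and apply the strong maximum principle and Hopf lemma.

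The core of the argument is the standard two-part scheme. One starts the planes at the right end of $\Omega$ in the $e_1$ direction and pushes them inward; for $s$ just below the maximum of $x_1$ on $\overline\Omega$ the cap $\Sigma_s'$ is contained in $\Omega$ and $w_s\le 0$ there, with strict inequality in the interior and $\partial_{x_1}w_s<0$ on $T_s\cap\Omega$ by Hopf, using that $w_s=0$ on $T_s$ and $w_s<0$ just below it, plus the Dirichlet condition $u=0$, $|Du|=c_0$ on $\partial\Omega$ to handle boundary points. Let $s_\star$ be the smallest $s$ for which the reflected cap still fits inside $\Omega$ and $w_s\le0$ throughout $\Sigma_s'$. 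If the plane gets stuck before the domain is symmetric, then either (a) $\partial\Sigma_{s_\star}'$ becomes internally tangent to $\partial\Omega$ at some point not on $T_{s_\star}$, or (b) $T_{s_\star}$ hits $\partial\Omega$ orthogonally at some point $P$. In case (a) one gets $w_{s_\star}=0$ at an interior tangency point, contradicting the strong maximum principle (since $w_{s_\star}\le 0$, $w_{s_\star}\not\equiv 0$, else $\Omega$ is already symmetric); the overdetermined condition $|Du|=c_0$ is not even needed here. Case (b) is where the corner difficulty lives: $P$ is a boundary point where $T_{s_\star}$ meets $\partial\Omega$ perpendicularly, the reflected cap and $\Omega$ are tangent to second order there, and one must derive a contradiction from $w_{s_\star}\le 0$ near $P$ together with $w_{s_\star}=0$ and, crucially, $Dw_{s_\star}=0$ at $P$ (the latter because both Dirichlet and Neumann data of $u$ agree on the two pieces of boundary meeting at $P$). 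Classically Serrin's corner lemma says $w_{s_\star}$ has a nonzero second-order derivative into the cap at $P$; but as the excerpt's Pucci example shows, for $\lambda<\Lambda$ the relevant barrier can vanish to order $2+\alpha$, so the linear corner lemma is unavailable.

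The hard part, therefore, is case (b), and the resolution must use the hypothesis that $F(M,p)$ is $C^1$ in $M$: linearizing the equation \emph{at the solution $u$ itself} (rather than crudely bounding the difference) produces a bona fide linear operator $L v=F_M(D^2u,|Du|):D^2v+\text{(gradient terms)}$ whose coefficients are continuous up to the boundary, thanks to $u\in C^{2,\alpha}$ and the $C^1$ dependence on $M$; for this genuinely linear operator with continuous coefficients Serrin's original corner lemma does apply. Concretely, I expect the argument to produce, near the corner point $P$, a comparison between $w_{s_\star}$ and a solution of the linearized equation — or to differentiate the family $w_s$ in $s$ — and then invoke the linear Hopf-type lemma at a corner to conclude $w_{s_\star}\equiv 0$, forcing $\Sigma_{s_\star}'=\Omega$ and symmetry in the $e_1$ direction. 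Running this for every direction gives that $\Omega$ is a ball and $u$ is radial. The subtle points I anticipate are: (i) justifying the linearization uniformly near the corner, where $D^2u$ may a priori degenerate, and checking the ellipticity constants of $L$ stay controlled via (H2)/(H2)$'$; (ii) making the viscosity-to-classical regularity bootstrap rigorous so that "$F_M(D^2u,|Du|)$" is a continuous matrix field — this is exactly the content of the remark that (H2) upgrades regularity; and (iii) the bookkeeping that both the Dirichlet and the Neumann overdetermined conditions are needed precisely to kill the first-order term of $w_{s_\star}$ at $P$ so that the (linear) corner lemma can be brought to bear.
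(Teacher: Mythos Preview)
Your overall strategy is right, and you correctly isolate the corner situation (ii) as the crux. But the resolution you sketch --- ``for this genuinely linear operator with continuous coefficients Serrin's original corner lemma does apply'' --- is where the gap lies. Serrin's corner lemma (Lemma~2 in \cite{Se}) does not apply to an arbitrary linear uniformly elliptic operator with merely continuous coefficients. It requires a structural condition on the coefficients at the corner: if $e_1$ is the moving direction and the linearized operator has coefficients $a_{ij}(x)$, one needs $a_{1j}$ to vanish on $T_{s_\star}$ for $j\ge 2$ (so that a coordinate change makes the principal part the Laplacian at the corner), and one also needs Lipschitz regularity of the $a_{ij}$ --- which would force $F\in C^{1,1}$ in $M$, not just $C^1$. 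The paper itself remarks that under the stronger assumption $F\in C^{1,1}$ one can indeed finish via Serrin's lemma; the whole point is to get by with $C^1$.

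The paper's proof supplies both missing pieces. First, it uses (H1) in a pointwise way you did not identify: writing $\overline{M}$ for the matrix obtained from $M$ by flipping the sign of the off-diagonal entries in the first row and column, one has $\mathrm{spec}(M)=\mathrm{spec}(\overline{M})$, hence $F(M,p)=F(\overline{M},p)$, and differentiating gives $\partial F/\partial m_{1j}(M)+\partial F/\partial m_{1j}(\overline{M})=0$ for $j>1$. Since on the reflecting plane $D^2v=\overline{D^2u}$ and the linearization coefficient $a_{1j}$ is a symmetric average of $\partial F/\partial m_{1j}$ along the segment joining $D^2u$ and $D^2v$, this yields $a_{1j}=0$ on $T_{s_\star}\cap\partial\Sigma$, in particular at the corner $Q$. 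Second --- and this is the genuinely new idea --- the paper does \emph{not} invoke Serrin's corner lemma. Instead it rotates and stretches coordinates so that $\tilde A(Q)=I$ (the corner remains a right angle because $a_{1j}(Q)=0$), and by continuity of $\tilde A$ the equation for $\tilde w$ satisfies $\mathcal{M}^-_{1-\varepsilon,1+\varepsilon}(D^2\tilde w)-k|D\tilde w|-l\tilde w\le 0$ in a small neighbourhood of $Q$. It then applies a separate perturbative result (Theorem~\ref{theoperturb}): when $\Lambda/\lambda$ is close to $1$, the homogeneous positive solution of $\mathcal{M}^-=0$ in a quarter-space cone has homogeneity $\beta$ close to $2$ (via \cite{ASS}), and a Hopf-type lemma at the corner gives $\tilde w(te)\ge c\,t^{\beta}$ along rays into the cap, contradicting the $O(|x-Q|^{2+\alpha})$ upper bound coming from the boundary $C^{2,\alpha}$ expansion of $w$. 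Your proposal contains neither the $a_{1j}(Q)=0$ observation nor this cone-based replacement for the corner lemma, and without them the argument does not close under the hypothesis $F\in C^1$.
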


To our knowledge, prior to our paper results like Theorem \ref{theoc1} for fully nonlinear operators have appeared only for the particular cases when $F(M)=S_k(M)$ is a symmetric polynomial of the eigenvalues of $M$, see \cite{Re2, BNST, En}, and for equations involving Pucci operators or operators in the form $|Du|^\alpha  \mathcal{M}_{\lambda, \Lambda}^+(D^2u)$, with ellipticity constants sufficiently close to each other, see \cite{BD}. In Section \ref{sect-perturb} we will give an extension of the main theorem in \cite{BD} to operators satisfying (H2), with a short proof which will also play an important role in the proof of Theorem \ref{theoc1}.

%\noindent{\it Remark 1.} We will actually prove a more general statement. Specifically,  it is sufficient that $F$ be $C^1$ only on a neighbourhood in $\S_n$ of  $D^2u(\Gamma)$, where $\Gamma$ is a subset of $\partial\Omega$, to be determined below. We will use this in the proofs of Theorems \ref{theon2} and \ref{theoconv}, below.

%\noindent{\it Remark 2.} It is worth noticing that if $F$ is in addition supposed to be in $C^{1,1}(\S_n)$ then Serrin's original proof applies to \re{princ} -- see Remark 4 below.

Theorem \ref{theoc1} deals with a general fully nonlinear equation under the extra hypothesis that $F$ is $C^1$ in $M$. It is an open problem whether the result holds without this condition, except in the particular cases which we next describe.
\smallskip

A particularly interesting example of a fully nonlinear operator which is not $C^1$ is given by one of the Pucci operators $\mathcal{M}_{\lambda, \Lambda}^-$ or $\mathcal{M}_{\lambda, \Lambda}^+$. We can prove that a fairly general symmetry result still holds for these operators when the space dimension is two, or in higher dimensions if we assume that $\Omega$ is strictly convex. One observation that is crucial for our proof is that the Pucci operators are $C^1$ in the set of non-singular symmetric matrices. Indeed, the discontinuities of the derivative of $\mathcal{M}_{\lambda, \Lambda}^-(M)$ or $\mathcal{M}_{\lambda, \Lambda}^+(M)$ take place only when $M$ has at least one eigenvalue equal to zero.

 We will make the following more general assumption that is satisfied in particular by Pucci's operators, or by extremal operators in the form $\mathcal{M}_{\lambda, \Lambda}^\pm(D^2u)\pm k|D u| $.
\begin{itemize}
 \item[(H3)] $F$ is $C^1$ in $M$ on the set $\{ M \in \S_n\;|\; \mathrm{det}(M)\not=0\}\times \mathbb{R}_+$.
 \end{itemize}

The following theorem contains a  general symmetry statement for  two-dimensional domains.

\begin{theo}\label{theon2} Assume (H1), (H2),  (H3),  and that $\Omega\subset\R^2$ (that is, $n=2$). If in addition $f(0)\ge 0$, then $\Omega$ is a ball and $u$ is radial.
\end{theo}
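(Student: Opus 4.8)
The plan is to run the Alexandrov--Serrin moving planes method in an arbitrary direction, deduce that $u$ is symmetric about the associated critical hyperplane, and then let the direction vary to conclude that $\Omega$ is a ball. Two preliminaries. First, the interior and boundary regularity theory for uniformly elliptic equations, together with the $C^{2,\gamma}$-regularity of $\partial\Omega$, shows that (H2) (or $(H2)'$) forces $u\in C^{2,\alpha}(\overline\Omega)$, so all derivatives below make classical sense. Second, since $f(0)\ge0$, near $\partial\Omega$ the function $u$ is a supersolution of $F(D^2u,|Du|)-Cu\le0$ for a suitable $C$, so the Hopf boundary lemma for Pucci operators gives $|Du|>0$ on $\partial\Omega$, i.e.\ $c_0>0$ and $Du=-c_0\nu$ on $\partial\Omega$ with $\nu$ the outer normal. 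This is where $f(0)\ge0$ enters, and the strict positivity $c_0>0$ is essential for the corner analysis.

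Fix $e_1$, put $T_\lambda=\{x_1=\lambda\}$, $\Sigma_\lambda=\Omega\cap\{x_1<\lambda\}$, and let $u^{T}$ be the reflection of $u$ across $T_\lambda$; by (H1), $u^{T}$ solves the same equation. Sliding $\lambda$ upward from $\min_{\overline\Omega}x_1$ and subtracting the equations for $u$ and $u^{T}$, hypothesis (H2) shows $w=u-u^{T}$ satisfies $\mathcal{M}^+_{\lambda,\Lambda}(D^2w)+k|Dw|+C|w|\ge0\ge\mathcal{M}^-_{\lambda,\Lambda}(D^2w)-k|Dw|-C|w|$ in $\Sigma_\lambda$; since $w\le0$ on $\partial\Sigma_\lambda$, the maximum principle for these inequalities propagates $w\le0$ up to a critical position $\lambda_\star$, where $w\le0$ in $\Sigma_{\lambda_\star}$ and $w=0$ on $T_{\lambda_\star}\cap\Omega$. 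At $\lambda_\star$ either (A) $\Sigma_{\lambda_\star}^{T}$ is internally tangent to $\partial\Omega$ at some $q\notin T_{\lambda_\star}$, or (B) $T_{\lambda_\star}\perp\partial\Omega$ at some $x_0\in\partial\Omega\cap T_{\lambda_\star}$. Case (A) is classical: the difference vanishes at the boundary point $q$ of the reflected cap, which has an interior ball there, so either $w\equiv0$ (symmetry) or the difference is strictly signed inside and Hopf's lemma forces a strict sign for its normal derivative at $q$ — impossible because $|Du|=|Du^{T}|=c_0$ on $\partial\Omega$ makes the two normal derivatives coincide. Thus (A) gives symmetry about $T_{\lambda_\star}$, and the crux is case (B).

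In case (B) take coordinates with $x_0=0$, $T_{\lambda_\star}=\{x_1=0\}$ and $\partial\Omega=\{x_2=\phi(x_1)\}$ near $0$, $\phi(0)=\phi'(0)=0$. Differentiating $u=0$ and $|Du|^2=c_0^2$ along $\partial\Omega$ gives $u_{x_1}(0)=0$, $u_{x_1x_2}(0)=0$, and $u_{x_1x_1}(0)=-c_0\kappa$ with $\kappa$ the curvature of $\partial\Omega$ at $x_0$; a short computation using that the reflected caps stayed inside $\Omega$ forces $u_{x_1x_1}(0)\le0$. Hence $Du(0)=c_0e_2$, and, because $u_{x_1x_2}(0)=0$, $D^2u^{T}(0)=D^2u(0)=\mathrm{diag}(-c_0\kappa,\mu)$ with $\mu:=u_{x_2x_2}(0)$, so $Dw(0)=0$ and $D^2w(0)=0$. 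If $D^2u(0)$ is nonsingular ($\kappa\ne0$ and $\mu\ne0$), then by (H3) $F$ is $C^1$ on a neighborhood of $D^2u(0)$ in $\S_2$, so near $x_0$ the identity $F(D^2u,|Du|)-F(D^2u^{T},|Du^{T}|)+f(u)-f(u^{T})=0$ linearizes to a linear uniformly elliptic equation $a_{ij}(x)w_{ij}+b_i(x)w_i+c(x)w=0$ with continuous coefficients; since $n=2$, $\Sigma_{\lambda_\star}$ has a right-angle corner at $x_0$ between $T_{\lambda_\star}$ and $\partial\Omega$, and Serrin's corner lemma (Lemma~2 of \cite{Se}), applied to this linear equation, forces $\partial_s w(x_0)$ or $\partial^2_{ss}w(x_0)$ to be nonzero for $s$ pointing into the corner — contradicting $Dw(x_0)=0$, $D^2w(x_0)=0$ — unless $w\equiv0$, i.e.\ symmetry. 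This is the step for which (H3) replaces the full $C^1$-in-$M$ hypothesis of Theorem~\ref{theoc1}, and where $n=2$ makes the corner a right angle.

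The remaining case, $D^2u(0)$ singular ($\kappa=0$ or $\mu=0$), is the one I expect to be the main obstacle: then $F$ need not be differentiable along the Hessian values joining $D^2u^{T}(x)$ to $D^2u(x)$ near $x_0$, the linearization is unavailable, and this is precisely the regime in which the classical corner lemma genuinely fails for Pucci operators (homogeneity $2+\alpha$ at a right angle, as recalled via \cite{ASS}). The plan here is to work directly in the quarter-plane model of the corner: construct sub/supersolution barriers for $\mathcal{M}^{\pm}_{\lambda,\Lambda}$ homogeneous of the corner's critical order; use that $f(0)\ge0$ makes $w$ a subsolution up to a controlled zeroth-order term; and use that differentiating the overdetermined relations once more shows $w$ vanishes at $x_0$ to order at least $3$ along $\partial\Omega$ while vanishing identically on $T_{\lambda_\star}$. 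A Hopf-type comparison at the corner then shows such fast vanishing is incompatible with the difference being strictly signed in $\Sigma_{\lambda_\star}$, forcing $w\equiv0$. In every case $u$ is symmetric about $T_{\lambda_\star}$; since the direction was arbitrary and the critical hyperplane varies continuously with it, $u$ is symmetric about a hyperplane orthogonal to every direction, and these all pass through a common point, whence $\Omega$ is a ball and $u$ is radial.
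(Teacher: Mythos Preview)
Your handling of the nonsingular corner case is close to the paper's, though you invoke Serrin's original corner lemma directly; that lemma needs Lipschitz coefficients in the linearized equation, which requires $F\in C^{1,1}$ rather than $C^1$. The paper instead uses a perturbative argument: after checking $a_{12}(0)=0$ (which follows from the reflection symmetry of $F$), a rotation/rescaling makes the linearized operator equal to the Laplacian at the origin, and then a Hopf-type lemma for cones with ellipticity ratio close to one (Proposition~\ref{hopf}) yields the contradiction. Also, your opening claim that (H2) forces $u\in C^{2,\alpha}(\overline\Omega)$ is not known; the paper works with the boundary second-order expansion of Proposition~\ref{prop-expansion-on-the-boundary} and only obtains $C^{2,\alpha}$ regularity near the boundary once $F$ is $C^1$ at $(D^2u(Q),|Du(Q)|)$.

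The genuine gap is the singular case $\det D^2u(Q)=0$. You yourself flag it as ``the main obstacle'' and offer only a plan: barriers at the critical homogeneity for Pucci operators in a quarter-plane, third-order vanishing of $w$ along $\partial\Omega$, and a ``Hopf-type comparison at the corner''. But this is precisely the regime in which such a comparison fails---the corner solution for $\mathcal{M}^-_{\lambda,\Lambda}$ is homogeneous of order $2+\alpha$ with $\alpha>0$, so a function vanishing to order $2+\alpha$ at the corner is \emph{not} forced to be identically zero. Your proposed ingredients do not produce a contradiction, and the paragraph is not a proof.

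The paper's route is entirely different and avoids the singular case rather than confronting it. The key observation (Lemma~\ref{meas}) is measure-theoretic: the Gauss map sends the set $\Theta=\{x\in\partial\Omega:\kappa(x)=0\}$ to a null set of $S^{1}$. Since $n=2$, each boundary point has exactly two unit tangents, so the set of directions $e$ for which a moving plane could stop at a corner $Q$ with $\kappa(Q)=0$ has measure zero in $S^{1}$. One also checks (i) a moving plane never reaches a corner with $\kappa(Q)<0$, and (ii) at a corner with $\kappa(Q)>0$, the relation $F(D^2u(Q),c_0)=-f(0)\le 0$ together with the rescaling $u\mapsto u/R$ (which makes $c_0$ as small as desired without changing $\lambda,\Lambda,k$) forces $u_{\nu\nu}(Q)\neq 0$. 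Hence for a full-measure, and in particular dense, set of directions the corner Hessian is nonsingular, (H3) applies, and the nonsingular argument gives symmetry; density then extends symmetry to every direction. This is the missing idea.
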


Finally, we can show that under (H1)-(H3) the only strictly convex domain in $\rn$ for which \re{princ} may have a solution is the ball. By strictly convex, we mean that $\partial \Omega$ is a $C^{2,\alpha}$-surface whose second fundamental form is positive definite (strictly).

\begin{theo}\label{theoconv} Assume (H1), (H2), (H3), and that $\Omega$ is a strictly convex domain. If in addition $f(0)\ge 0$, then $\Omega$ is a ball and $u$ is radial.
\end{theo}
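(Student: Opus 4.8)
The plan is to run the method of moving planes exactly as in Serrin's original argument, and to show that for strictly convex $\Omega$ the only two obstructions to completing the procedure — an interior tangency and a boundary tangency at an orthogonal corner — can both be ruled out, the second one precisely because strict convexity prevents the appearance of a right-angled corner between the moving plane and $\partial\Omega$. More concretely, for a unit direction $e$ let $H_s=\{x\cdot e=s\}$, let $\Omega_s$ be the cap cut off on the side $x\cdot e>s$, let $\Omega_s'$ be its reflection across $H_s$, and let $w_s(x)=u(x)-u(x^s)$ where $x^s$ is the reflection of $x$. As $s$ decreases from its maximal value, $\Omega_s'\subset\Omega$ initially; the Dirichlet and Neumann overdetermination and the Hopf lemma start the process, and one slides $s$ down to the critical value $s_\star$ at which either (i) $\Omega_{s_\star}'$ becomes internally tangent to $\partial\Omega$ at some point $P\notin H_{s_\star}$, or (ii) $H_{s_\star}$ becomes orthogonal to $\partial\Omega$ at some point $P\in H_{s_\star}\cap\partial\Omega$. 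In case either is impossible, one gets symmetry in direction $e$; doing this for all $e$ forces $\Omega$ to be a ball.

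The engine throughout is the comparison principle for the linearized operator. On the overlap $\Omega_s'$ the function $w_s$ satisfies, in the viscosity sense, a linear uniformly elliptic inequality $\mathcal{M}^-_{\lambda,\Lambda}(D^2 w_s)-k|Dw_s|\le 0$ with $w_s\ge 0$ on $\partial(\Omega_s')$ initially, so $w_s\ge 0$ on $\Omega_s'$; since $F$ is $C^1$ in $M$ (away from $\det=0$ in the case of Theorem~\ref{theoconv}, via (H3)), on the set where $u$ is $C^{2,\alpha}$ the difference $w_s$ actually solves a genuine linear equation $L_s w_s:=a_{ij}(x)\partial_{ij}w_s+b_i(x)\partial_i w_s=0$ with continuous (Hölder) coefficients, obtained by integrating the derivative of $F$ along the segment joining $D^2u(x)$ and $D^2u(x^s)$; this is where the regularity implied by (H2), together with the fact that along a strictly convex domain the Hessians stay non-degenerate near the boundary, is used to make (H3) applicable. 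The strong maximum principle and the Hopf boundary lemma for $L_s$ then give the usual dichotomy at $s_\star$: in case (i), $w_{s_\star}\equiv 0$ on the connected component of $\Omega_{s_\star}'$ containing $P$, hence (unfolding) $u$ is symmetric; in case (ii), one needs Serrin's corner lemma at $P$.

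The main obstacle, and the reason for the strict convexity hypothesis, is precisely case (ii): one must show $w_{s_\star}\equiv 0$ knowing only that $w_{s_\star}\ge 0$ on $\Omega_{s_\star}'$ and that $w_{s_\star}$ together with its first derivatives vanishes at the corner point $P$ where $H_{s_\star}$ meets $\partial\Omega$ orthogonally. The classical corner lemma asserts a sign for a second-order directional derivative of $w_{s_\star}$ at $P$ along a direction entering the corner, contradicting the Hopf-type information coming from the Neumann condition; but, as the introduction stresses, this lemma is genuinely linear and can fail for fully nonlinear $F$. The point is that for a \emph{strictly convex} $\Omega$ the overlapping region $\Omega_{s_\star}'$ near $P$ is not a right angle: the reflected boundary $\partial\Omega_{s_\star}'$ and $\partial\Omega$ meet $H_{s_\star}$ at $P$, and strict convexity of $\partial\Omega$ forces the interior opening angle of $\Omega\cap\Omega_{s_\star}'$ at $P$ to be \emph{strictly less than} $\pi/2$ (indeed the reflected cap bends away from $\partial\Omega$). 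In such an acute wedge one can build an explicit positive barrier — a harmonic-type function homogeneous of degree $\beta<2$ in the wedge, with $\beta\to\pi/(\text{opening angle})>1$ — which, after comparison against $C w_{s_\star}$ via the comparison principle for $L_{s_\star}$, shows $w_{s_\star}$ cannot vanish to second order at $P$ unless it vanishes identically. Since we only need the barrier to beat the \emph{linear} operator $L_{s_\star}$ (the linearization, which is uniformly elliptic by (H2)), the failure of the nonlinear corner lemma does not hurt us. This is exactly the mechanism that fails in two dimensions or for general domains (where the corner is genuinely a right angle and the degree-$(2+\alpha)$ Pucci solution of \cite{ASS} obstructs the barrier), which is why Theorems~\ref{theon2} and \ref{theoconv} are stated with their respective extra hypotheses.

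A clean way to organize the write-up is: first establish that (H2) (resp. (H2)$'$) gives $u\in C^{2,\alpha}(\overline\Omega)$ and $|Du|=c_0>0$ on $\partial\Omega$ (so $\Omega$ satisfies an interior/exterior ball condition and the Hopf lemma applies), citing the regularity results used earlier; second, set up the moving planes and verify the process can start using the overdetermined data; third, define $s_\star$ and treat case (i) by the strong maximum principle exactly as in Serrin; fourth — the heart — prove the acute-wedge lemma: near a point $P$ where $H_{s_\star}\perp\partial\Omega$, strict convexity yields that $D_s:=\Omega\cap\Omega_{s_\star}'$ near $P$ lies in a wedge of opening $<\pi/2$, construct the degree-$\beta$ barrier there with $\beta\in(1,2)$, and conclude via comparison that $w_{s_\star}\equiv0$; fifth, in either case unfold to get that $u$ is symmetric and monotone in the direction $e$, and since $e$ was arbitrary and $\Omega$ is bounded and connected, conclude $\Omega$ is a ball centered at the common center of symmetry and $u$ is radial. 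I expect step four to be where essentially all the work lies; steps one through three are the by-now-standard viscosity adaptation of Serrin's scheme, and step five is immediate once symmetry in every direction is known.
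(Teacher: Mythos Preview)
Your central geometric claim is false, and the whole argument collapses with it. At a point $Q\in\partial\Omega\cap T_{s_\star}$ where the moving hyperplane is orthogonal to $\partial\Omega$, the domain $\Sigma_{s_\star}=\{x\cdot e>s_\star\}\cap\Omega$ is bounded near $Q$ by a piece of $T_{s_\star}$ and a piece of $\partial\Omega$, and these meet at $Q$ at a \emph{right angle} by definition of orthogonality. Strict convexity of $\partial\Omega$ is a second-order property; it does not change the tangent cone of $\Sigma_{s_\star}$ at $Q$, which is always a quarter-space. There is no acute wedge. (Your formula $\beta=\pi/\theta$ is also used in the wrong direction: for $\theta<\pi/2$ one gets $\beta>2$, which is worse for comparison, not better.) You also never use either of the two hypotheses that distinguish this theorem from the general open problem, namely (H3) and $f(0)\ge 0$.

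The paper's argument is entirely different and uses strict convexity in an algebraic, not geometric, way. By Lemma~\ref{lem1}, at any boundary point the matrix $D^2u(Q)$ (in the sense of Proposition~\ref{prop-expansion-on-the-boundary}) has eigenvalues $u_{\nu\nu}(Q)$ and $c_0\kappa_1(Q),\dots,c_0\kappa_{n-1}(Q)$. Strict convexity gives $\kappa_i(Q)>0$, so the tangential eigenvalues are nonzero. If in addition $u_{\nu\nu}(Q)=0$, then $D^2u(Q)\ge 0$ and the equation at $Q$ yields
\[
\lambda\sum_i \kappa_i(Q)\;-\;kc_0 \;\le\; \mathcal{M}^-_{\lambda,\Lambda}(D^2u(Q))-k|Du(Q)| \;\le\; F(D^2u(Q),|Du(Q)|)=-f(0)\le 0,
\]
which is impossible once $c_0$ is made small by rescaling $u$ (end of Section~\ref{sect-movingplanes}). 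Hence $\det D^2u(Q)\neq 0$, and by (H3) the operator $F$ is $C^1$ in a neighbourhood of $(D^2u(Q),|Du(Q)|)$. Now the proof of Theorem~\ref{theoc1} applies locally: $u\in C^{2,\alpha}$ near $Q$ by Proposition~\ref{bdaryC2alpha-local}, $w$ solves a genuine linear equation with continuous coefficients satisfying $a_{1j}(Q)=0$ for $j>1$, and after a linear change of variables the equation has ellipticity ratio arbitrarily close to $1$ near $Q$, so the perturbative corner argument of Section~\ref{sect-perturb} gives $w\equiv 0$. That is the mechanism; your acute-wedge barrier plays no role.
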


Notice that the assumptions on $f$ in the last two theorems contain as very particular cases the ``torsion" problem $f(u)= 1$ and the ``eigenvalue" problem $f(u)=  \lambda u $.

Theorems \ref{theon2} and \ref{theoconv} are completely new for fully nonlinear operators.

In the end, we comment on the organization of the paper and the proofs of the above theorems. In section \ref{sect-regularity} we collect some boundary regularity results for viscosity solutions to uniformly elliptic equations, most of which are proved in our recent work~\cite{SS}.  In section \ref{sect-movingplanes} we recall the moving planes method as our main strategy to prove that $\Omega$ is a ball. It turns out that, compared to its classical application to the Laplace equation, all steps of this method are easily adaptable to fully nonlinear equations, except for the key step excluding  the so-called \emph{corner (or right-angle) situation}. The purpose of the rest of the paper is then to rule out this corner situation, under suitable assumptions. In section \ref{sect-perturb} we obtain a full symmetry result under (H1) and (H2), provided  the equation is a small nonlinear perturbation of the Laplace equation, that is, the ratio $\Lambda/\lambda$ is sufficiently close to one. The proof in section \ref{sect-perturb} is based on an idea by Birindelli and Demengel in \cite{BD}, and uses the recent results in \cite{ASS}. In section~\ref{sect-proofC1} we give the proof of Theorem \ref{theoc1}, whose main ingredient is an application of the perturbative proof in section \ref{sect-perturb} to a linearized version of \re{princ}, in some sufficiently small neighbourhood of the corner point. Finally, the proofs of Theorems \ref{theon2} and \ref{theoconv} use in addition a measure-theoretic observation, which states that the set of unit normals to the boundary of the domain at points where the Gauss curvature vanishes is negligible on the unit sphere. Theorem \ref{theon2} and Theorem \ref{theoconv} are proved in section \ref{sect-pucci}.

\section{Regularity considerations}
\label{sect-regularity}

In this section we collect some regularity results that apply to the viscosity solutions of the equation \eqref{princ}. If the reader is  willing to assume from the beginning that the solution of \re{princ} is in $C^{2,\alpha}(\overline{\Omega})$, then they may skip this section, after scanning Lemma~\ref{lem1} below and observing it holds with $A= D^2u(x)$ and $b= D u(x)$.

The first regularity result we recall is a consequence of the uniform ellipticity and the theory of Krylov and Safonov. It implies that any solution of \re{princ} belongs to the class $C^{1,\alpha}(\Omega)$, for some $\alpha>0$.

\begin{prop} \label{c1alpha}
Assume that $F(D^2u,Du)$ satisfies (H2),  $\Omega$ is a $C^{2}$-domain, and $g\in C(\overline\Omega)$. Then the solution of
 \beeq\label{equagen}
 F(D^2u, Du) = g(x) \;\mbox{ in }\Omega, \qquad u=0 \;\mbox{ on }\partial\Omega
 \eeq
 is in the class $C^{1,\alpha}(\Omega)$ for some $\alpha > 0$, and
 $$
 \|u\|_{C^{1,\alpha}(\Omega)} \le C\left(\|u\|_{L^\infty(\Omega)}+\|g\|_{L^\infty(\Omega)}\right).
 $$
\end{prop}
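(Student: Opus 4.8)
The plan is to split the estimate into an interior part, which is classical, and a boundary part, which requires care and for which I would ultimately lean on \cite{SS}. The common first step is to pass from $F$ to the extremal operators: since $F$ satisfies (H2) and $F(0,0)=0$, applying \re{elip} with $B=0$, $q=0$ to the $C^2$ test functions touching $u$ from above and from below shows that a viscosity solution $u$ of \re{equagen} satisfies, in the viscosity sense,
\[
\mathcal{M}^{-}_{\lambda,\Lambda}(D^2u)-k|Du|\ \le\ g(x)\ \le\ \mathcal{M}^{+}_{\lambda,\Lambda}(D^2u)+k|Du|\qquad\text{in }\Omega ;
\]
thus $u$ lies in the Pucci class attached to $\lambda,\Lambda,k$ and to the bounded datum $g$. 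The Krylov-Safonov Harnack inequality holds for this class, and the interior $C^{1,\alpha}$ theory of Krylov-Safonov and Caffarelli then gives, for every ball with $B_r(x_0)\subset\subset\Omega$, a bound $\|u\|_{C^{1,\alpha}(B_{r/2}(x_0))}\le C(\|u\|_{L^\infty(B_r(x_0))}+\|g\|_{L^\infty})$ for some $\alpha>0$; the first-order term $k|Du|$ is lower order and is absorbed by rescaling. None of the boundary hypotheses enter here.

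For the boundary I would use the $C^2$ regularity of $\partial\Omega$ and the vanishing of $u$ on it. Near a boundary point, straighten $\partial\Omega$ by a $C^2$ diffeomorphism: the transformed equation is again uniformly elliptic, with constants and first-order coefficient controlled by $\lambda,\Lambda,k$ and the $C^2$-norm of the chart, the transformed datum stays bounded by $C\|g\|_{L^\infty}$, and the transformed solution vanishes on a flat piece of the boundary. Localized barriers of the form $a\,x_n\mp b\,x_n^{1+\beta}$ force $|u(x)|\le C\,\dist(x,\partial\Omega)$ near $\partial\Omega$, and a boundary version of the improvement-of-oscillation iteration, carried out on the half-balls $B^{+}_{2^{-j}}$ at a boundary point (comparing $u$ at each scale with a linear function vanishing on $\{x_n=0\}$), upgrades this to a $C^{1,\alpha}$ bound up to the boundary, again controlled by $\|u\|_{L^\infty}+\|g\|_{L^\infty}$. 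This is exactly the boundary gradient estimate proved in \cite{SS}, which I would quote rather than reprove. A finite cover of $\overline\Omega$ by interior balls and boundary charts then patches the local bounds into the claimed global estimate, with $C=C(n,\lambda,\Lambda,k,\Omega)$.

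The step I expect to be the real obstacle is the boundary estimate. One must build the barriers and run the dyadic iteration so that the resulting $C^{1,\alpha}$ modulus depends only on $n,\lambda,\Lambda,k$ and the geometry of $\Omega$ --- in particular not on any modulus of continuity of $g$ beyond $\|g\|_{L^\infty}$ --- and one must verify that straightening $\partial\Omega$ preserves uniform ellipticity with controlled constants; this is precisely what is done in \cite{SS}. A minor additional point is making the absorption of $k|Du|$ uniform, both inside the interior Harnack/oscillation machinery and in the barrier comparison, which is routine once $u$ is known to lie in the Pucci class displayed above.
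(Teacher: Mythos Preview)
Your proposal is essentially correct and is in line with what the paper does: the paper does not prove this proposition at all, but simply records it as well known and points to Proposition~2.2 in \cite{ASS} and Theorem~1.4 in \cite{SS}. Your sketch --- interior $C^{1,\alpha}$ from the Caffarelli--Krylov--Safonov theory, boundary $C^{1,\alpha}$ by flattening and invoking the boundary estimates of \cite{SS}, then a covering argument --- is exactly the content of those references, and in particular your reliance on \cite{SS} for the boundary part matches the paper's own citation.

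One small imprecision is worth flagging: membership in the Pucci class $\mathcal M^{-}_{\lambda,\Lambda}(D^2u)-k|Du|\le g\le \mathcal M^{+}_{\lambda,\Lambda}(D^2u)+k|Du|$ by itself only yields interior $C^{\alpha}$, not $C^{1,\alpha}$. The interior $C^{1,\alpha}$ estimate genuinely uses that $u$ solves the \emph{equation} $F(D^2u,Du)=g$ (so that, for instance, difference quotients $u(\cdot+h)-u(\cdot)$ land in the Pucci class, or so that one can run Caffarelli's perturbation-from-a-homogeneous-solution argument). Your phrasing suggests the two-sided Pucci inequality alone is enough; just make sure the write-up invokes the equation itself at that step.
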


This proposition  is well known. It can be found for example as Proposition 2.2 in~\cite{ASS} or Theorem 1.4 in \cite{SS}.

The {\it interior} regularity in the proposition above cannot be improved in general, as the examples in \cite{NV} show. In the special case when the function $F(M,p)$ is assumed to be convex or concave in $M$, it is well known that the solution $u$ belongs to the smoother class $C^{2,\alpha}(\Omega)$.

The next proposition  says that the solutions to fully nonlinear uniformly elliptic equations are $C^{2,\alpha}$ on the boundary, and have a second order Taylor expansion at each boundary point with the corresponding error bounds, under the sole condition (H2).

\begin{prop} \label{prop-expansion-on-the-boundary}
Let $F$ satisfy (H2), $\Omega$ be $C^{2,\gamma}$-smooth, and $g\in C^\gamma(\Omega)$, for some $\gamma>0$. For any solution $u$ of \eqref{equagen} there exist  $A\in C^\alpha(\partial \Omega, \S_n)$ and $b\in C^{1,\alpha}(\partial \Omega, \R^n)$, such that for each $x\in \partial \Omega$ we have $F(A(x),b(x))=0$, and there exists a quadratic polynomial $P_x$ of the form
\[P_x(y) = \frac 12 \lsc A(x)(x-y),x-y \rsc + \lsc b(x),x-y \rsc = \frac 12 a_{ij}(x) (y_i-x_i)(y_j-x_j) + b_i(x) (y_i-x_i)\]
such that for all  $y \in \Omega$, and some $\alpha>0$,
\begin{align}
|u(y) - P_x(y)| &\leq C|x-y|^{2+\alpha},\\
|D u(y) - D P_x(y)| &\leq C|x-y|^{1+\alpha}.
\end{align}
\end{prop}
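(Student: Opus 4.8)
The plan is to derive the statement from a boundary $C^{2,\alpha}$ estimate for \eqref{equagen} under (H2), which is proved in \cite{SS}; once that estimate is available, $A$ and $b$ are nothing but $D^2u$ and $Du$ restricted to $\partial\Omega$, $P_x$ is the second-order Taylor polynomial of $u$ at $x$ (it has no constant term since $u(x)=0$), and the two displayed inequalities are just Taylor's formula read off from the $C^{2,\alpha}$ modulus of $u$. By Proposition \ref{c1alpha} together with the boundary estimates of \cite{SS} we may start from $u\in C^{1,\alpha}(\overline\Omega)$ for some $\alpha>0$, so that $Du$ is already a Hölder vector field up to the boundary.

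To obtain the boundary $C^{2,\alpha}$ bound I would localize and flatten: fix $x_0\in\partial\Omega$ and a $C^{2,\gamma}$-diffeomorphism $\Phi$ taking $B_r(x_0)\cap\Omega$ onto $B_1^+$ and $B_r(x_0)\cap\partial\Omega$ onto $B_1\cap\{y_n=0\}$. Then $v=u\circ\Phi^{-1}$ is a viscosity solution in $B_1^+$, vanishing on $\{y_n=0\}$, of an equation $\widetilde F(D^2v,Dv,y)=\widetilde g(y)$, obtained by substituting $D^2u=(D\Phi)^{t}D^2v(D\Phi)+(\text{terms linear in }D^2\Phi\text{ and }Dv)$. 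Since $D\Phi\in C^{1,\gamma}$, while $D^2\Phi$ and $Dv$ are Hölder continuous, these extra terms together with the original $C^\gamma$ data combine into a right-hand side $\widetilde g\in C^{\beta}$ with $\beta=\min(\alpha,\gamma)>0$, and $\widetilde F$ is uniformly elliptic with constants comparable to $\lambda,\Lambda$, Lipschitz in the gradient slot, and $\beta$-Hölder in $y$ in the usual sense (Hölder continuity modulo a factor $1+\|M\|$); if one wishes, freezing the gradient along the known Hölder field $Dv$ reduces this to $\widehat F(D^2v,y)=\widehat g(y)$. The boundary $C^{2,\alpha}$ estimate of \cite{SS} then gives $v\in C^{2,\alpha'}(\overline{B_{1/2}^+})$ for some $\alpha'\in(0,\beta]$, with $\|v\|_{C^{2,\alpha'}}$ bounded by $\|v\|_{L^\infty}+\|\widehat g\|_{C^{\beta}}$; undoing $\Phi$, $u\in C^{2,\alpha'}$ in $B_{r/2}(x_0)\cap\overline\Omega$.

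Covering the compact set $\partial\Omega$ by finitely many such neighbourhoods and taking $\alpha$ to be the smallest of the resulting exponents, we get $u\in C^{2,\alpha}$ in a one-sided neighbourhood of $\partial\Omega$ with a uniform norm bound. There $u$ is a $C^2$, hence classical, solution, so the equation holds pointwise, $F(A(x),b(x))=g(x)$, at every $x\in\partial\Omega$, with $A(x)=D^2u(x)$ and $b(x)=Du(x)$. Restricting the Hölder fields $D^2u$ and $Du$ to the $C^{2,\gamma}$-hypersurface $\partial\Omega$ gives $A\in C^\alpha(\partial\Omega,\S_n)$ and $b\in C^{1,\alpha}(\partial\Omega,\R^n)$ — the restriction of a $C^{1,\alpha}$ field to a $C^{1,\alpha}$ surface being $C^{1,\alpha}$ — and Taylor's formula for $u$ along the segment joining $x$ to $y$, with the Hölder seminorm $[D^2u]_{C^\alpha}$, yields $|u(y)-P_x(y)|\le C|x-y|^{2+\alpha}$ and $|Du(y)-DP_x(y)|\le C|x-y|^{1+\alpha}$.

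The one substantial ingredient is the boundary $C^{2,\alpha}$ estimate itself, which I would quote from \cite{SS} rather than reprove. It is genuine input, not a formality: for general fully nonlinear uniformly elliptic $F$ the \emph{interior} $C^{2,\alpha}$ estimate is false, by the examples of \cite{NV}, so any proof must exploit the flatness of the Dirichlet data near $\partial\Omega$ — heuristically, that $u$ is there well approximated by a multiple of the distance to $\partial\Omega$ — which supplies the compactness that the interior theory lacks. Everything else (flattening, transforming the equation, absorbing the gradient term and the $y$-dependence, restricting to $\partial\Omega$, Taylor expanding) is routine.
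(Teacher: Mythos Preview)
Your argument has a genuine gap: you conflate two different meanings of ``boundary $C^{2,\alpha}$ regularity''. What \cite{SS} provides under (H2) alone is a \emph{pointwise} second-order expansion at each $x\in\partial\Omega$, with coefficients depending H\"older-continuously on $x$ along $\partial\Omega$. This does \emph{not} give $u\in C^{2,\alpha}$ in a one-sided neighbourhood of $\partial\Omega$: the interior $C^{2,\alpha}$ estimate fails for general $F$ satisfying only (H2), by the very examples in \cite{NV} that you cite, and nothing in the boundary expansion forces $D^2u$ to exist or be H\"older at interior points close to $\partial\Omega$. Upgrading the pointwise expansion to honest $C^{2,\alpha}$ regularity near the boundary requires the additional hypothesis $F\in C^1$ and Savin's small-oscillation result; that is exactly Proposition~\ref{bdaryC2alpha}, a strictly stronger statement. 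Indeed the paper stresses, immediately after the present proposition, that $A(x)$ ``is not a standard second derivative'' and ``$D^2u$ is in general only defined on $\partial\Omega$''. Consequently your step ``the boundary $C^{2,\alpha}$ estimate of \cite{SS} then gives $v\in C^{2,\alpha'}(\overline{B_{1/2}^+})$'' is unjustified, and the subsequent Taylor expansion along a segment from $x$ to $y$ --- which needs $D^2u$ to exist and be H\"older at interior points --- cannot be carried out.

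The paper's route sidesteps this. Theorem~1.2 of \cite{SS}, applied to $u$ itself, yields directly the pointwise second-order expansion at each boundary point, producing $A$, $b$, and the first displayed bound without ever asserting that $u$ is $C^2$ in a neighbourhood. The second displayed bound (for $Du$) cannot then be obtained by differentiating the first; instead one observes that each partial derivative $\partial_j u$ --- well defined in $\Omega$ by Proposition~\ref{c1alpha} --- satisfies in the viscosity sense the two-sided extremal inequalities called $(S^*)$ in \cite{SS}, and one applies Theorem~1.1 of \cite{SS} to each $\partial_j u$ separately to get its $C^{1,\alpha}$-at-the-boundary expansion. This is the step your argument is missing.
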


The fact that classical solutions of the Dirichlet problem for uniformly elliptic equations are $C^{2,\alpha}$ on the boundary of the domain was first proved by Krylov in \cite{krylov1983}.
Extending these results to viscosity solutions turns out to be less trivial than one might expect. This is the subject of our recent work \cite{SS}, in which we also establish the asymptotic expansions in Proposition \ref{prop-expansion-on-the-boundary}. In fact, Proposition \ref{prop-expansion-on-the-boundary} is obtained by applying Theorem 1.2 in \cite{SS} to $u$, and by applying Theorem 1.1 in \cite{SS} to each partial derivative of $u$ in $\Omega$ (these partial derivatives satisfy the inequalities $(S^*)$ in \cite{SS}).

Note that in Proposition \ref{prop-expansion-on-the-boundary} we will always have $b(x) = D u(x)$, by Proposition \ref{c1alpha}. If $u$ is also a $C^2$ function around the boundary $\partial \Omega$ then $A(x) = D^2u(x)$. Because of this, we will abuse notation and write $D^2u = A$,  $(D^2 u)_{ij} = \partial_{ij} u = a_{ij}$. We need to remember that this is not a standard second derivative, but it is understood only in the sense of Proposition \ref{prop-expansion-on-the-boundary} and $D^2 u$ is in general only defined on $\partial \Omega$.

The next proposition, also from \cite{SS}, says that if $F$ is $C^1$ in $M$ then the solution $u$ is actually $C^{2,\alpha}$-smooth in a neighborhood of $\partial \Omega$. Its proof combines Proposition \ref{prop-expansion-on-the-boundary} with a smoothness result for solutions with small oscillations originally due to Ovidiu Savin.

\begin{prop} \label{bdaryC2alpha}
Let the operator  $F(D^2u, Du)$ satisfy (H2), and $\Omega$ be $C^{2,\gamma}$-smooth. If $F(M,p)$ is continuously differentiable in $M$ then any viscosity solution $u$ of \re{princ} is in the class $C^{2,\alpha}(\Omega_\delta)$ for some $\alpha,\delta > 0$, where $\Omega_\delta := \{ x \in \Omega: \dist(x,\partial \Omega) < \delta \}$.
\end{prop}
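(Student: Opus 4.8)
The plan is to upgrade the boundary second-order expansion of Proposition~\ref{prop-expansion-on-the-boundary} to interior $C^{2,\alpha}$ regularity near $\partial\Omega$, by a rescaling argument that feeds into Savin's small-perturbation (small-oscillation) regularity theorem; the latter is precisely where the hypothesis that $F$ is $C^1$ in $M$ will be used.

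First I would fix $x\in\Omega$ with $d:=\dist(x,\partial\Omega)$ small, pick a nearest point $x_0\in\partial\Omega$, and apply Proposition~\ref{prop-expansion-on-the-boundary} at $x_0$ to \re{princ} rewritten as $F(D^2u,|Du|)=g$ with $g:=-f(u)$; since $u\in C^{1,\alpha}(\Omega)$ by Proposition~\ref{c1alpha} and $f$ is locally Lipschitz, $g$ is H\"older continuous, so the proposition applies. It yields a quadratic polynomial $P_{x_0}$ with $D^2P_{x_0}=A(x_0)$, $DP_{x_0}(x_0)=Du(x_0)$, $P_{x_0}(x_0)=0$ and $F(A(x_0),|Du(x_0)|)=-f(0)$ (the equation \re{princ} at the boundary point $x_0$, where $u$ vanishes), together with the estimates $\sup_{B_d(x)}|u-P_{x_0}|\le Cd^{2+\alpha}$ and $\sup_{B_d(x)}|Du-DP_{x_0}|\le Cd^{1+\alpha}$, valid because $B_d(x)\subset\Omega$ and $|y-x_0|\le 2d$ on $B_d(x)$. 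Then I rescale to the unit ball: set $w(z):=d^{-2}\bigl(u(x+dz)-P_{x_0}(x+dz)\bigr)$ for $z\in B_1$. The two displayed estimates give $\|w\|_{L^\infty(B_1)}+\|Dw\|_{L^\infty(B_1)}\le Cd^{\alpha}$, so $w$ has small oscillation once $d$ is small; and the affine change of variables $y=x+dz$ together with the subtraction of the polynomial $P_{x_0}$ transforms the viscosity equation for $u$ into the statement that $w$ is a viscosity solution in $B_1$ of
\[
F\bigl(D^2w+A(x_0),\,|d\,Dw+DP_{x_0}(x+dz)|\bigr)+f\bigl(d^2w+P_{x_0}(x+dz)\bigr)=0 .
\]

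The crucial step is to view this as a \emph{uniformly} small perturbation of one fixed $C^1$ uniformly elliptic operator. By (H2), the equation for $w$ is uniformly elliptic in the Hessian with constants $\lambda,\Lambda$ independent of $x$; and because $u\in C^{1,\alpha}(\Omega)$, $A\in C^\alpha(\partial\Omega)$ is bounded, and $f$ is locally Lipschitz, all arguments of $F$ and $f$ that appear stay in a fixed compact set as $x$ varies over a neighbourhood of $\partial\Omega$, so that for $z\in B_1$ and $|p|\le C$,
\[
\Bigl|\,F\bigl(N+A(x_0),|d\,p+DP_{x_0}(x+dz)|\bigr)+f\bigl(d^2 s+P_{x_0}(x+dz)\bigr)-H_0(N)\,\Bigr|\le Cd ,
\]
where $H_0(N):=F\bigl(N+A(x_0),|Du(x_0)|\bigr)+f(0)$ is a fixed operator, uniformly elliptic and $C^1$ in $N$ (this is the point where the $C^1$-in-$M$ assumption on $F$ is used), with $H_0(0)=0$. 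Thus $w$ solves a $Cd$-perturbation of the fixed $C^1$ uniformly elliptic operator $H_0$ while $\|w\|_{L^\infty(B_1)}\le Cd^{\alpha}$, and Savin's small-perturbation regularity theorem (cf.\ \cite{SS}) gives, for $d$ small enough and after possibly decreasing $\alpha$, the bound $\|w\|_{C^{2,\alpha}(B_{1/2})}\le C\bigl(\|w\|_{L^\infty(B_1)}+d\bigr)\le Cd^{\alpha}$, with $C$ independent of $x$. In particular $u$ is $C^{2,\alpha}$ in $B_{d/2}(x)$ and $D^2u(y)=D^2w((y-x)/d)+A(x_0)$ there, whence $\|D^2u-A(x_0)\|_{L^\infty(B_{d/2}(x))}\le Cd^{\alpha}$ and $[D^2u]_{C^{\alpha}(B_{d/2}(x))}\le C$, uniformly in $x$; a routine covering argument (using also that $A\in C^\alpha(\partial\Omega)$) then gives $u\in C^{2,\alpha}(\Omega_\delta)$ for some $\delta>0$, with $D^2u$ even extending $C^\alpha$-continuously up to $\partial\Omega$, where it equals $A$.

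I expect the main obstacle to be exactly the previous paragraph: verifying that Savin's theorem applies with constants \emph{uniform} in the base point $x_0\in\partial\Omega$, i.e.\ that the rescaled equations form a uniform family of small perturbations of a single $C^1$ uniformly elliptic operator. The $C^1$-in-$M$ hypothesis on $F$ is what supplies a well-behaved (uniformly elliptic) linearization for $H_0$, and it is the only ingredient used beyond (H1)--(H2). Everything else is bookkeeping: the choice of rescaling at scale $d=\dist(x,\partial\Omega)$, the \emph{quantitative} closeness of $u$ to a quadratic with a power gain $d^{\alpha}$ provided by Proposition~\ref{prop-expansion-on-the-boundary}, and the patching of the local rescaled estimates into a single estimate in a neighbourhood of $\partial\Omega$.
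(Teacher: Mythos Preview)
Your proposal is correct and follows precisely the approach that the paper indicates: the paper does not give a self-contained proof of this proposition but refers to \cite{SS}, stating only that ``its proof combines Proposition~\ref{prop-expansion-on-the-boundary} with a smoothness result for solutions with small oscillations originally due to Ovidiu Savin.'' Your argument is exactly a fleshing-out of that sentence --- boundary quadratic expansion, rescaling at scale $d=\dist(x,\partial\Omega)$, and application of Savin's small-perturbation theorem to the rescaled function --- and the place where you flag the $C^1$-in-$M$ hypothesis as essential is the right one. One wording issue worth tightening: the operator $H_0(N)=F(N+A(x_0),|Du(x_0)|)+f(0)$ is not literally a \emph{single} fixed operator but a family indexed by $x_0\in\partial\Omega$; what makes the Savin threshold uniform is that $A(x_0)$ and $Du(x_0)$ range in a fixed compact set and $F$ is $C^1$, so the family has a common modulus of continuity for $D_MF$.
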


A local version of this result is also available.

\begin{prop} \label{bdaryC2alpha-local}
Let $F$ satisfy (H2), $\Omega$ be $C^{2,\gamma}$-smooth, and  $u$ be a viscosity solution of \eqref{princ}. Let $x_0 \in \partial \Omega$ and $P$ be a second order polynomial such that $|u(x) - P(x)| = o( |x-x_0|^2)$ for $x\in \Omega$ close to $x_0$. Assume also that $F$ is $C^1$ in $M$ in a neighbourhood of  $(D^2P(x_0),|D P(x_0)|)$. Then $u$ is $C^{2,\alpha}$  in a neighborhood of $x_0$ in $\overline{\Omega}$.
\end{prop}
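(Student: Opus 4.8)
The plan is to subtract from $u$ its second-order Taylor polynomial at $x_0$, rescale, and invoke the small-oscillation regularity result (due to Savin) that already underlies Proposition~\ref{bdaryC2alpha}, checking that only the values of $F$ in an arbitrarily small neighbourhood of $(D^2P(x_0),|DP(x_0)|)$ are used. As a preliminary, I would identify $P$ with the boundary expansion polynomial of $u$ at $x_0$. Apply Proposition~\ref{prop-expansion-on-the-boundary} to $u$, regarded as a solution of $F(D^2u,Du)=g$ with $g=-f(u)$; since $u\in C^{1,\alpha}(\Omega)$ by Proposition~\ref{c1alpha} and $f$ is locally Lipschitz, $g\in C^{\gamma}(\Omega)$, so the proposition applies and yields $A\in C^{\alpha}(\partial\Omega,\S_n)$, $b\in C^{1,\alpha}(\partial\Omega,\R^n)$ with $b(x)=Du(x)$, together with, for each $x\in\partial\Omega$, a quadratic polynomial $P_x$ such that $|u(y)-P_x(y)|\le C|x-y|^{2+\alpha}$ on $\Omega$. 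Comparing this at $x=x_0$ with the hypothesis $|u(y)-P(y)|=o(|y-x_0|^2)$, the quadratic polynomial $Q:=P-P_{x_0}$ satisfies $|Q(y)|=o(|y-x_0|^2)$ as $y\to x_0$ within $\Omega$; because $\Omega$ is $C^{2,\gamma}$, the set of unit directions pointing into $\Omega$ at $x_0$ has nonempty interior, and testing $Q(x_0+t\nu)$ as $t\to0^+$ along such $\nu$ forces successively the affine and the purely quadratic part of $Q$ at $x_0$ to vanish, so $Q\equiv0$. Hence $P=P_{x_0}$; in particular $D^2P(x_0)=A(x_0)$, $DP(x_0)=b(x_0)$, $|u-P|\le C|\cdot-x_0|^{2+\alpha}$ on $\Omega$, and (since $u=0$ on $\partial\Omega$) $|P(y)|\le C|y-x_0|^{2+\alpha}$ for $y\in\partial\Omega$; also $F(D^2P(x_0),|DP(x_0)|)+f(0)=0$, which is just \eqref{princ} at $x_0$.

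Next, write $\mathbf A:=D^2P(x_0)$, $\mathbf b:=DP(x_0)$, and rescale: set $v_r(y):=r^{-2}\big(u(x_0+ry)-P(x_0+ry)\big)$ on $V_r:=B_1\cap r^{-1}(\Omega-x_0)$, a domain which for small $r$ is a small $C^{2,\gamma}$-perturbation of the half-ball $B_1^+$. By the preceding paragraph $\|v_r\|_{L^\infty(\overline{V_r})}\le Cr^{\alpha}$ (the bound on the nearly flat part of $\partial V_r$ uses $|P|\le C|\cdot-x_0|^{2+\alpha}$ on $\partial\Omega$), and $v_r$ is a viscosity solution of the uniformly elliptic equation $\widehat F_r(D^2v_r,Dv_r,y)=0$ with $\widehat F_r(M,q,y)=F\big(\mathbf A+M,\,|\mathbf b+r\mathbf A y+rq|\big)+f\big(P(x_0+ry)+r^2v_r(y)\big)$. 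As $r\to0$, $\widehat F_r$ converges, uniformly on compact sets, to the operator $M\mapsto F(\mathbf A+M,|\mathbf b|)+f(0)$, which by the hypothesis on $F$ is $C^1$ in $M$ near $M=0$, is uniformly elliptic with the constants of (H2), and vanishes at $M=0$ by the last identity of the first paragraph; the $q$- and $y$-dependence of $\widehat F_r$ is Lipschitz, respectively $C^{\gamma}$, with constants $O(r)$. For $r$ small enough, $\|v_r\|_{L^\infty}$ falls below the smallness threshold of Savin's small-oscillation regularity result; applied on $V_r$ exactly as in the proof of Proposition~\ref{bdaryC2alpha}, it produces a $C^{2,\alpha}$ bound for $v_r$ near $0$. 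Unscaling shows $u\in C^{2,\alpha}$ on $B_{r/2}(x_0)\cap\overline\Omega$, which is a neighbourhood of $x_0$ in $\overline\Omega$.

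The main difficulty is not conceptual but consists in verifying that the perturbation argument behind Proposition~\ref{bdaryC2alpha} is genuinely local: that it probes $F$ only on an arbitrarily small matrix-neighbourhood of $(\mathbf A,|\mathbf b|)=(D^2P(x_0),|DP(x_0)|)$ — which is arranged by choosing $r$ small — and that it is stable under the lower-order gradient term in $F$ and under the $x$-dependence carried by $r\mathbf A y$ and by $f(P(x_0+ry)+r^2v_r(y))$, all of which disappear as $r\to0$. Once this is checked, the proof is complete, since the global hypothesis ``$F$ is $C^1$ in $M$'' enters the proof of Proposition~\ref{bdaryC2alpha} only through exactly such a localized use near the boundary values of $(D^2u,|Du|)$, and the first paragraph shows these values are $(D^2P(x_0),|DP(x_0)|)$ at the point $x_0$.
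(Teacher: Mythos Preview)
Your proposal is correct and follows precisely the approach the paper indicates: the paper does not give a separate proof of Proposition~\ref{bdaryC2alpha-local} but presents it as the local version of Proposition~\ref{bdaryC2alpha}, whose proof it describes as ``combin[ing] Proposition~\ref{prop-expansion-on-the-boundary} with a smoothness result for solutions with small oscillations originally due to Ovidiu Savin.'' Your argument does exactly this---identifying $P$ with the boundary expansion polynomial $P_{x_0}$, rescaling so that the oscillation is small, and invoking Savin's result---while correctly isolating the only new point, namely that the $C^1$ hypothesis on $F$ is needed solely in a neighbourhood of $(D^2P(x_0),|DP(x_0)|)$.
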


We finish this section with a result which is not strictly about regularity. In fact it is a general property of functions independent of the equation \eqref{princ}. We will use the following lemma for carrying out the  moving planes method for viscosity solutions which we explain in section \ref{sect-movingplanes}. The precise form of this lemma will also play a crucial role in section \ref{sect-pucci} when we study the overdetermined problem for the Pucci equations.

\begin{lemma}\label{lem1}
Let $\Omega$ be a domain with a $C^2$ boundary and $u\in C^1(\overline \Omega)$ be a function satisfying
\begin{align*}
u &= 0 \text{ on } \partial \Omega, \\
|D u| &= c_0 \text{ on } \partial \Omega.
\end{align*}
Assume that at a point $x \in \partial \Omega$, there exists $A\in \S_n$ and $b\in \R^n$ such that the second order polynomial
$$P(y) =  \frac{1}{2} \lsc A(x-y),x-y \rsc + \lsc b,x-y \rsc$$
satisfies
$$
|u(y)-P(y)| \leq C|x-y|^{2+\alpha}\quad\mbox{and}\quad |D u(y)- D P(y)| \leq C|x-y|^{1+\alpha},\quad\mbox{ for all}\; y \in \Omega.
$$
Then the interior normal vector $\nu(x)$ is an eigenvector of $A=(a_{ij})_{i,j=1}^n$, corresponding to the eigenvalue $a_{nn}$. The other $(n-1)$ eigenvalues of $A$  are $c_0 \kappa_1(x),\ldots,c_0 \kappa_{n-1}(x)$, where $\kappa_1(x),\ldots,\kappa_{n-1}(x)$ are the principal curvatures of $\partial \Omega$ at $x$. The corresponding eigenvectors are the directions of the principal curvatures of $\partial \Omega$ at $x$.
\end{lemma}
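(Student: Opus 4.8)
The plan is to pass to adapted coordinates at $x$ and extract the spectrum of $A$ by differentiating the two boundary conditions along $\partial\Omega$, using the second‑order expansion at $x$ in place of the (possibly nonexistent) classical Hessian of $u$; the equation \eqref{princ} plays no role. First I would take orthonormal coordinates with $x=0$ and $e_n$ equal to the interior normal $\nu(x)$, so that near the origin $\partial\Omega$ is the graph $y_n=\varphi(y')$, $y'=(y_1,\dots,y_{n-1})$, of a $C^2$ function with $\varphi(0)=0$, $D\varphi(0)=0$ and $\Omega$ lying locally in $\{y_n>\varphi(y')\}$. Then $|x-y|\asymp|y'|$ for $y\in\partial\Omega$ near $x$, while $\varphi(y')=\tfrac12\lsc D^2\varphi(0)y',y'\rsc+O(|y'|^3)$, where $D^2\varphi(0)$ is, up to sign, the second fundamental form of $\partial\Omega$ at $x$: its eigenvectors are the principal directions and its eigenvalues are $\pm\kappa_1(x),\dots,\pm\kappa_{n-1}(x)$, the sign depending on the orientation fixed in the statement.

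Since $u\in C^1(\overline\Omega)$ with $u\equiv0$ and $|Du|=c_0$ on $\partial\Omega$, the gradient $Du(x)$ is orthogonal to the tangent plane and has length $c_0$; orienting as in \eqref{princ} I take $Du(x)=c_0e_n$. Letting $y\to x$ in the hypotheses (using $u\in C^1(\overline\Omega)$) gives $P(x)=u(x)=0$ and $DP(x)=Du(x)=c_0e_n$, so that, with $x=0$, $P(y)=\tfrac12\lsc Ay,y\rsc+c_0y_n$ and $DP(y)=Ay+c_0e_n$. Now I would substitute the boundary parametrization $y=(y',\varphi(y'))$ into $u(y)=P(y)+O(|x-y|^{2+\alpha})$ and use $u(y)=0$; since the mixed terms $a_{in}y_i\varphi(y')$ and $a_{nn}\varphi(y')^2$ are $O(|y'|^3)$, this reads
\[
0=\tfrac12\lsc A'y',y'\rsc+c_0\varphi(y')+O\big(|y'|^{2+\alpha}+|y'|^3\big),\qquad A':=(a_{ij})_{1\le i,j\le n-1}.
\]
Inserting the expansion of $\varphi$, dividing by $|y'|^2$ and letting $y'\to0$ along each direction, the quadratic forms must cancel, i.e. $A'=-c_0D^2\varphi(0)$. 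Thus the tangential block $A'$ has the principal directions of $\partial\Omega$ at $x$ as eigenvectors and eigenvalues $c_0\kappa_i(x)$.

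For the normal direction I would differentiate the Neumann condition. From $Du(y)=Ay+c_0e_n+O(|x-y|^{1+\alpha})$ and $x=0$ one finds, for $y\in\partial\Omega$ near $x$,
\[
c_0^2=|Du(y)|^2=c_0^2+2c_0\sum_{j<n}a_{nj}y_j+o(|y'|),
\]
the $o(|y'|)$ collecting $|Ay|^2$, $2c_0a_{nn}\varphi(y')$ and the expansion error, all of order $|y'|^{1+\beta}$ for some $\beta>0$. Dividing by $|y'|$ and letting $y'\to0$ along each axis forces $a_{nj}=0$ for every $j<n$ (here $c_0>0$; if $c_0=0$ the Dirichlet step already yields a block‑diagonal $A$). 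Hence $Ae_n=a_{nn}e_n$: the interior normal $\nu(x)=e_n$ is an eigenvector of the symmetric matrix $A$, with eigenvalue $a_{nn}$, and $A$ is block‑diagonal in these coordinates. Together with the previous paragraph this gives exactly the spectral description in the lemma.

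I do not expect a genuine obstacle — the argument is elementary and the PDE is not used. The two points that require some care are: (i) checking that every remainder (the $O(|x-y|^{2+\alpha})$ and $O(|x-y|^{1+\alpha})$ from the expansion, the $O(|y'|^3)$ from the boundary graph, and the term $|Ay|^2$) is of strictly lower order than the quadratic, respectively linear, form one isolates, which holds because $\alpha>0$ and $|x-y|\asymp|y'|$; and (ii) fixing the orientation conventions for $\nu(x)$ and for the $\kappa_i(x)$ so that the sign in $A'=-c_0D^2\varphi(0)$ agrees with the statement.
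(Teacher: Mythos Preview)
Your argument is correct and follows essentially the same route as the paper: choose adapted coordinates, use the Dirichlet condition along the boundary graph to identify the tangential block $A'=-c_0D^2\varphi(0)$, and then use the Neumann condition together with the gradient expansion to kill the mixed entries $a_{nj}$ (the paper projects $Du-DP$ onto $\nu$, you square $|Du|$, which isolates the same linear term $2c_0(Ay)_n$). One small correction: your parenthetical for $c_0=0$ is off --- the Dirichlet step only gives $A'=0$, not $a_{nj}=0$; in that case use instead that $Du\equiv0$ on $\partial\Omega$ together with $Du(y)=Ay+O(|y|^{1+\alpha})$ to conclude $a_{nj}=0$ for $j<n$.
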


\begin{proof}
Without loss of generality let us assume that $x =0$, $\nu = (0,\dots,0,1)$ and the $(n-1)$ principal directions of curvature of $\partial \Omega$ are the first $(n-1)$ coordinate axes.

Note that since $u \in C^1(\overline \Omega)$, we immediately have $b = D u = c_0 \nu = (0,\dots,0,c_0)$, that is,
\[P(y) = \frac 12 \sum_{i,j=1}^{n}a_{ij} y_iy_j + cy_n.\]
 Since $\partial \Omega$ is $C^2$ smooth, there is a $C^2$ function $h$ defined in a neighborhood $V$ of the origin in $\R^{n-1}$ such that $(\tau, h(\tau)) \in \partial \Omega$ for all $\tau \in V$. In particular $u(\tau,h(\tau)) \equiv 0$ and $|D u(\tau,h(\tau))| \equiv c_0$. The eigenvalues of $D^2 h(0)\in \S_{n-1}$ are the principal curvatures of $\partial \Omega$ at $x=0$, and its eigenvectors are the principal directions.

Let $(\tau,0)$ be a vector tangent to $\partial \Omega$ at the origin.
For $\eps$ small, since $h \in C^2$ and $D h(0)=0$ we have that $h(\eps\tau) = \eps^2 \lsc D^2h(0)\tau,\tau \rsc + o(\eps^2)$. Let $z_\eps = (\eps\tau , h(\eps\tau))$. From the definition of $h$, we know that $z_\eps \in \partial \Omega$. We compute, for any $\tau \in \R^{n-1}$,
\begin{align*}
0 &= u(z_\eps) \\
&= P(z_\eps) + O(\eps^{2+\alpha}) \\
&= \frac{\eps^2}2 \sum_{i,j=1}^{n-1}a_{ij} \tau_i \tau_j + c_0 \eps^2\sum_{i,j=1}^{n-1} \frac{\partial_{ij} h(0)} 2 \tau_i \tau_j + o(\eps^2).
\end{align*}
Therefore, $a_{ij} = -c_0 \partial_{ij} h(0)$ for $i,j = 1,\dots,n-1$. This finishes the proof of the second part of the lemma. We are left to prove that $a_{nj} = 0$ for $j=1,\dots,n-1$.

Since $z_\eps \in \partial \Omega$, we know that $D u(z_\eps) = c_0 \nu_\eps$, where $\nu_\eps$ is the inner unit normal vector to $\partial \Omega$ at $z_\eps$. From the assumption, we have that
\[ |D u(z_\eps) - D P(z_\eps)| \leq C |z_\eps|^{1+\alpha} \leq C \eps^{1+\alpha}.\]	
However, since $b = c_0 \nu = (0,\dots,0,c_0)$ we get
\begin{align*}
|D u(z_\eps) - D P(z_\eps)|&\ge
\nu \cdot (D u(z_\eps) - D P(z_\eps))\\
 &= \sum_{i=1}^{n}\nu_i  \left(c_0 (\nu_\eps)_i - \sum_{j=1}^{n}a_{ij} (z_\eps)_j - b_i\right), \\
&= c_0 \nu \cdot (\nu_\eps-\nu) - \sum_{i,j=1}^{n}a_{ij} (z_\eps)_j \nu_i, \\	
&= O(\eps^2) + \eps \sum_{j=1}^{n-1}a_{nj} \tau_j .
\end{align*}

Therefore $\sum_{j=1}^{n-1}a_{nj} \tau_j  = 0$ for any tangential vector $(\tau,0)$. This implies that $a_{nj} = 0$ for $j=1,\dots,n-1$, so $a_{nn}$ is an eigenvalue of $A$, and $\nu$ is the corresponding eigenvector.
\end{proof}

\begin{remark}\label{remafterprop25}
If the function $u$ satisfies the assumptions of both Proposition \ref{prop-expansion-on-the-boundary} and Proposition \ref{lem1} then we obviously have $a_{nn} = u_{\nu\nu}(x)$, the latter derivative being understood in the sense of Proposition \ref{prop-expansion-on-the-boundary}.
\end{remark}

\section{The moving plane method and its corner situation}
\label{sect-movingplanes}

The proofs of the main results of this article are based on the Alexandrov-Serrin's moving planes method (see \cite{Se,BN}), which nowadays is a very standard tool in the theory of elliptic PDE. We recall that the main idea of this method is
to show that for sufficiently many directions $e\in S^{n-1}:=\{x\in\rn\,|\,|x|=1\}$ there exists $s=s(e)\in\rbig$ such that the domain
and the solution are symmetric with respect to the hyperplane
$\tlam=\{x\in\rn\,|\, \lsc x,e\rsc=s\}\:;$  we denote with $\lsc\cdot,\cdot\rsc$ the scalar product in $\rn$.

Fix for instance $e=(1,0,\ldots,0)$ and  set
for any $s\in\rbig$
$$
\begin{array}{rcl}
\tlam&=&\{x\:|\: \xun=s\}\,,\qquad \dlam =\{x\:|\:
\xun>s\}\,,\qquad \slam=\dlam\cap\Omega,\\
\xlam&=&(2s-\xun,x_2,\ldots,x_n)\;-\;\mbox{the reflexion of
}x \mbox{ with
    respect to }\tlam,\\
v_s(x) &=&u(\xlam),\qquad  \wlam(x)=v_s(x)-u(x)\,, \qquad
\mbox{ provided }\,x \in\slam,\\ d_0&=&
\inf\{s\in\rbig\;\:|\:\; \tmu\cap\overline{\Omega}=\emptyset\;
\mbox{ for all }\:\mu>s\}.
\end{array}
$$

It  follows from hypothesis (H1) that the function $v_s$ satisfies the same equation as $u$ in $\slam$, so by (H2) and the Lipschitz continuity of $f$ we get that $\wlam$ is a solution of
\beeq\label{baseineq}
\mathcal{M}_{\lambda, \Lambda}^-(D^2\wlam) - k|D \wlam| -l\wlam\le 0\qquad \mbox{in }\; \slam;
\eeq
from now on $l\ge0$ will denote the Lipschitz constant of $f$ on the interval $[0,\max_\Omega u]$. If $u$ is only a viscosity solution of \re{princ}, see for instance Proposition 2.1 in \cite{dLS} for the derivation of \re{baseineq}. In  \cite{dLS} it is proved that if $\Omega$ is a ball, then any solution of \re{princ} is radial; this result does not require the Neumann hypothesis in \re{princ}.

By using the standard moving planes method, exactly as in the proof of Theorem~1.1 in \cite{Li} or Theorem 1.1 in \cite{dLS} we can show that a hyperplane starting from  $s= d_0$ and moving to the left will move  as long as the reflexion of
$\slam$ with respect to $\tlam$ is contained in $\Omega$, and at least down to
 position $s=\lst$ (the critical position),
where
$$
\lst=\inf\{s\leq d\;\:|\:\;{({\Sigma_\mu})}^\mu\subset\Omega\;
\mbox{ and }
\lsc \nu(x),e\rsc\;<0
\mbox{ for all } \mu>s,\;x\in T_{\mu}\cap \dom\}
$$
(an upper index means reflexion with respect to the
hyperplane with the same index). Recall we denote with $\nu(x)$ the interior normal to $\partial \Omega$ at $x$. In addition, we   have
$\wlam>0$ in $\slam$ for all $s>\lst$, and hence $\wlst\ge 0$ in $\slst$.

Now,  at least one of the following two events occurs:
\begin{itemize}
\item[(i)]\ the reflexion of $\dom\cap\partial\slst$ with respect to $\tlst$ is internally
tangent to $\dom$ at some point $P\in \partial \Omega$ ;
\item[(ii)]\ $\tlst$ is orthogonal to $\dom$ at some point $Q\in \partial\Omega\cap \tlst$.
\end{itemize}

The function $u$, and its reflection $v_s$ are only assumed to be viscosity solutions of \eqref{princ}. In particular, we do not assume that they are $C^2$ functions. However, from Propositions \ref{c1alpha} and \ref{prop-expansion-on-the-boundary}, we know that they are $C^{1,\alpha}$ functions in $\overline \Omega$ with a second order expansion at every point on the boundary $\partial \Omega\cap\partial\slst $. Therefore, the same holds for the function $\wlst$.

In the case (i) occurs we have $\wlst\ge0$ in $\slst$, $\wlst(P^\lst)=\frac{\partial \wlst}{\partial \nu}(P^\lst)=0$. The Hopf lemma (see for instance Proposition \ref{hopf} and the remark following it, below) applied to \re{baseineq} implies $\wlst\equiv0$ in $\slst$, which means $\Omega$ is symmetric in the direction $e$, and we are done.

All the trouble is due to the possibility of orthogonality, that is, of the occurrence of the event (ii). In this place one needs a qualitatively different argument in the fully nonlinear case, compared to the well known results for semilinear and quasilinear equations.

Applying Lemma \ref{lem1}, we get that $u$ and $v_s$ have the same quadratic expansion at $Q$. This is a generalized version, for viscosity solutions,  of Serrin's argument on pages 307-308 of \cite{Se} which applies to functions in $C^2(\overline{\Omega})$. Thus, we get that $D^2 \wlst(Q)=0$. Here $D^2 \wlst$ is understood in the sense of Proposition \ref{prop-expansion-on-the-boundary}. More precisely, we get from that proposition that for some $\alpha>0$,
\beeq\label{equa2}
0 \leq \wlst(x) \leq C |x-Q|^{2+\alpha},
\eeq
for $x \in \Sigma_{s_\star}$.

In the case $\lambda=\Lambda$ Serrin's corner lemma (which can also be seen as the particular case $\lambda=\Lambda$, $\beta=2$, of Proposition~\ref{hopf} below) implies $\wlst\equiv0$ in $\slst$ and the proof is finished. For more general operators a different argument is needed. Our success in showing a symmetry result for each nonlinear equation depends on our finding an argument that implies $w_{s^*} \equiv 0$ in this case. Thus, for the proofs on Theorems \ref{theoc1}, \ref{theon2} and \ref{theoconv}, we just need to address the corner situation.

We end this section with an observation on the value $c_0$ of the Neumann data  in \re{princ}, which will be used in section \ref{sect-pucci}. We can assume that $c_0$ is as small as we like, at the only cost of increasing the Lipschitz constant $l$ in \re{baseineq}. This is because for each $R>0$ the function $u_R = u/R$ is a solution of $F_R(D^2 u, |Du|) + f_R(u) =0$, where $f_R(s) = f(Rs)/R$, and the operator $F_R(M,p) = F(RM, Rp)/R$ satisfies (H1), (H2) and (H3) with the same constants in (H2) as $F$.  It is obvious that the moving planes always reach the same positions for $u_R$ as for $u$.
In addition, if $f(0)\ge 0$ we have $c_0>0$, since $c_0=0$ is excluded by the Hopf lemma, (H2), and the fact that $u$ is a positive solution of $F(D^2u, |Du|) + c(x) u=-f(0)\le 0$ where $c(x)$ is a bounded function (take $c = (f(u)-f(0))/u$ if $u\not=0$ and $c=0$ otherwise).
%From now on we will assume that $\lst=0$, $Q$ is the origin in $\rn$,  and we will write $\bar x$ instead of $x^{\lst}$. We denote with $B_r$  the ball in $\rn$ centered at the origin with radius~$r$.

\section{Symmetry for small perturbations of the Laplacian}
\label{sect-perturb}

In this section, we show that if the two ellipticity constants in (H2) are sufficiently close to each other, then the symmetry result holds. The precise statement is as follows.

\begin{theo}\label{theoperturb} Assume (H1) and (H2), and that $\Omega$ is $C^2$. Assume also that $u$ is a viscosity solution of \eqref{princ} which is $C^{2,\alpha}$ in a neighborhood of $\partial \Omega$, for some $\alpha>0$. There exists a positive number $\epsilon_0$ depending only on $n$ and $\alpha$, such that if $|\Lambda/\lambda-1|<\epsilon_0$ then $\Omega$ is a ball and $u$ is radial.
\end{theo}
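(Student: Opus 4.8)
The plan is to carry out the moving planes argument up to the corner situation exactly as described in Section \ref{sect-movingplanes}, and then to rule out event (ii) using the perturbative structure: when $\Lambda/\lambda$ is close to $1$, the operator $F$ is a small nonlinear perturbation of the Laplacian, so solutions of the differential inequality \re{baseineq} cannot vanish to order $2+\alpha$ at a right-angle corner $Q$ unless they are identically zero. Concretely, I would first record that by the reduction at the end of Section \ref{sect-movingplanes} we may assume $c_0$ is as small as we wish; moreover, since $u$ is $C^{2,\alpha}$ near $\partial\Omega$ by hypothesis and $w_{\lst}\ge 0$ in $\slst$ with $D^2 w_{\lst}(Q)=0$ and the bound \re{equa2}, I have a genuine classical subsolution of a linear-type inequality near the corner.

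\textbf{The core mechanism.} Near $Q$ the set $\slst$ sits, to leading order, in the intersection of two orthogonal half-spaces (the tangent planes to $\partial\Omega$ at $Q$ and its reflection $\partial\Omega$ at the reflected point both passing through $Q$ by the orthogonality in (ii)). The idea, following Birindelli--Demengel \cite{BD} and using \cite{ASS}, is to construct an explicit barrier in this orthogonal wedge. By the results quoted in the introduction, the homogeneous solution $w$ of $\mathcal{M}^-_{\lambda,\Lambda}(D^2 w)=0$ in the orthogonal wedge which vanishes on the two faces is homogeneous of degree $2+\alpha(\lambda/\Lambda)$ where the exponent $\alpha(\lambda/\Lambda)\to 0$ as $\Lambda/\lambda\to 1$ (for the Laplacian it is exactly the harmonic function $x_1 x_2$, degree $2$). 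Fix $\beta\in(0,\alpha)$; then for $\Lambda/\lambda$ close enough to $1$ we can find such a wedge-barrier $\Phi$ which is positive in the interior, vanishes on the faces, is homogeneous of degree $2+\beta<2+\alpha$, and satisfies $\mathcal{M}^-_{\lambda,\Lambda}(D^2\Phi) - k|D\Phi| - l\,\Phi \ge 0$ in a small ball around $Q$ — here the smallness of $c_0$ (hence the near-flatness of the two faces and control of the lower-order terms $k$ and $l$ after rescaling) is what makes the subsolution inequality survive the perturbation of the domain from an exact wedge and the addition of the $k|D\Phi|$ and $l\Phi$ terms.

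\textbf{Comparison and conclusion.} With such a $\Phi$ in hand, I would compare $w_{\lst}$ with $\eps\Phi$ on a small half-ball-like region $\slst\cap B_r(Q)$: on the curved part of the boundary $w_{\lst}>0$ is bounded below while $\Phi$ is bounded, so for small $\eps$ we have $w_{\lst}\ge\eps\Phi$ there; on the part of $\partial\slst$ lying on $\partial\Omega$ (and its reflection) both functions vanish; and \re{equa2} gives $w_{\lst}(x)\le C|x-Q|^{2+\alpha}=o(|x-Q|^{2+\beta})$ near $Q$, which is dominated by $\eps\Phi$ in a neighborhood of $Q$ — so after possibly shrinking $r$ the ordering holds on the whole boundary. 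The maximum principle for \re{baseineq} (which $w_{\lst}$ satisfies as a supersolution, i.e. $\mathcal{M}^- - k|D\cdot| - l\,\cdot \le 0$, while $\eps\Phi$ is a subsolution) then forces $w_{\lst}\ge\eps\Phi>0$ in the interior near $Q$. But $w_{\lst}\ge 0$ everywhere and, sliding along the boundary arc, this contradicts the vanishing of $D^2 w_{\lst}$ at $Q$ in the sense of Proposition \ref{prop-expansion-on-the-boundary} — more precisely, $w_{\lst}(x)\ge \eps\Phi(x)\sim c|x-Q|^{2+\beta}$ along an interior cone is incompatible with \re{equa2} only if the exponents were equal; so instead I compare the other way and conclude $w_{\lst}\equiv 0$. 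The cleanest route is: the only way to avoid the contradiction is $w_{\lst}\equiv 0$ in $\slst$, which gives the reflective symmetry of $\Omega$ in direction $e$; doing this for every direction $e$ yields that $\Omega$ is a ball and, by the result of \cite{dLS} quoted above, $u$ is radial.

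\textbf{Main obstacle.} The delicate point is the barrier construction: one must produce $\Phi$ that is simultaneously (a) a strict subsolution of the genuinely nonlinear inequality $\mathcal{M}^-_{\lambda,\Lambda}(D^2\Phi)-k|D\Phi|-l\Phi\ge 0$ in an actual neighborhood of $Q$ — not just in the idealized wedge — and (b) of homogeneity strictly below $2+\alpha$. Controlling (a) requires quantifying how the $C^{2,\gamma}$ curvature of $\partial\Omega$ perturbs the exact-wedge computation, and then absorbing those errors together with the $k$ and $l$ terms; this is exactly where we invoke that $c_0$ may be taken arbitrarily small (equivalently, rescaling $u$), which shrinks the relevant neighborhood and weakens the lower-order terms relative to the second-order part. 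The role of $\eps_0=\eps_0(n,\alpha)$ is that the wedge-exponent $\alpha(\lambda/\Lambda)$ from \cite{ASS}, which is continuous in $\Lambda/\lambda$ and equals $0$ at $\Lambda/\lambda=1$, must be pushed below the fixed regularity gain $\alpha$ coming from Proposition \ref{prop-expansion-on-the-boundary}; this is a purely quantitative, dimension-and-$\alpha$-dependent threshold, and verifying the strict subsolution property uniformly below it is the heart of the proof.
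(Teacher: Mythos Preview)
Your overall strategy coincides with the paper's: contrast the regularity upper bound $w_{\lst}(x)\le C|x-Q|^{2+\alpha}$ from \re{equa2} with a lower bound $w_{\lst}(te)\ge c\,t^{\beta}$ coming from comparison with the homogeneous barrier in the right-angle corner supplied by \cite{ASS}, where $\beta\to 2$ as $\Lambda/\lambda\to 1$; once $\beta<2+\alpha$ this forces $w_{\lst}\equiv 0$. So the mechanism, and the role of $\eps_0=\eps_0(n,\alpha)$, are exactly right.

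However, the device you propose for the two obstacles you yourself flag --- the lower-order terms $-k|Dw|-lw$ and the curvature of $\partial\Omega$ near $Q$ --- does not work. You invoke the freedom (end of Section~\ref{sect-movingplanes}) to take $c_0$ small, claiming this ``weakens the lower-order terms'' and yields ``near-flatness of the two faces''. Neither is true: replacing $u$ by $u/R$ leaves $\lambda,\Lambda,k$ unchanged, \emph{increases} $l$, and does nothing whatsoever to the geometry of $\partial\Omega$. The paper does not use the $c_0$-rescaling in this proof at all. Instead: (i) the curvature is removed by a $C^2$-diffeomorphism straightening $\slst$ onto an exact cone near $Q$, which only perturbs the constants in the differential inequality; (ii) the lower-order terms are killed by \emph{spatial} rescaling into the vertex --- setting $w_r(x)=r^{-\beta}w(rx)$ replaces $k,l$ by $kr,\,lr^2$ --- followed by an iteration. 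Both steps are packaged into Proposition~\ref{hopf} (proved in the Appendix), which directly yields $\liminf_{t\to0}t^{-\beta}w_{\lst}(te)>0$ unless $w_{\lst}\equiv 0$. A further technical point you pass over is that the quarter-space $\Pi$ is not smooth, so the paper first approximates it from inside by smooth cones $\C_m$ (Lemma~\ref{lemma-almostcorner}) with homogeneity exponents $\beta_m\to 2$, and applies Proposition~\ref{hopf} in each $\C_m\cap B_r\subset\slst\cap B_r$.

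Finally, your comparison paragraph gets tangled in directions (``so instead I compare the other way''). The clean statement is simply: if $w_{\lst}\not\equiv 0$ then the barrier argument gives $w_{\lst}(te)\ge c\,t^{\beta}$ along an interior direction, which for $\beta<2+\alpha$ contradicts $w_{\lst}(te)\le C\,t^{2+\alpha}$ as $t\to 0$.
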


This theorem does not require the assumption that $F \in C^1$. The assumption $u \in C^{2,\alpha}(\Omega_\delta)$ is automatically satisfied if $F \in C^1$ (by Proposition \ref{bdaryC2alpha}), or $F$ is concave/convex in the second derivative of $u$ (by the Evans-Krylov theorem).

In the case when $F$ is a Pucci extremal operator, the result in Theorem \ref{theoperturb} is due to Birindelli and Demengel \cite{BD}. We will give a short proof of Theorem \ref{theoperturb} which combines the main idea in \cite{BD} with the results in \cite{ASS} on existence and properties of solutions of fully nonlinear equations in cones.

The main idea in \cite{BD} is essentially to reproduce an \emph{approximate} corner lemma. We will not be able to find a contradiction by analyzing the second derivatives of the solution $w_{s^*}$ at $Q$. Instead, we need to contradict a Taylor expansion of order $2+\alpha$ with a non degeneracy result of order strictly less than $2+\alpha$, which holds if $\Lambda/\lambda$ is sufficiently close to one. We are going to show how the contradiction argument can be carried out with the help of the following results from \cite{ASS}, which provide the required non-degeneracy results for domains with corners.

 The first proposition we need is part of Theorems 1.1 and 1.2 in \cite{ASS}.
\begin{prop}\label{fundyex} Let $\sigma\subset S^{n-1}$ be  open and smooth, and $\C$ be the projected cone,
\[ \C = \R_+^* \cdot \sigma = \{tx : t>0 \text{ and } x \in \sigma\} = \{x\in \rn\setminus\{0\} \,:\, |x|^{-1}x \in \sigma\} \]

 There exist a number $\beta >0$ and a $\beta$-homogeneous function $\Psi$ such that
$$
\Psi\in C(\overline{\C}),\qquad \mathcal{M}_{\lambda, \Lambda}^-(D^2\Psi) = 0\quad \mbox{ and }\quad \Psi >0 \quad \mbox{ in }\quad \C, \qquad \Psi = 0\quad \mbox{ on }\quad \partial \C.
$$
Any other solution of this problem is a multiple of $\Psi$.
\end{prop}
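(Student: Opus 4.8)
The plan is to reduce the problem to a fully nonlinear eigenvalue problem on the cross-section $\sigma$ by separation of variables, prove existence of the exponent $\beta$ by an intermediate value argument on a principal eigenvalue, and obtain the two uniqueness assertions by comparison. Since $\mathcal{M}^-$ is positively $1$-homogeneous in the matrix variable, the equation $\mathcal{M}^-(D^2\Psi)=0$ is invariant under the dilations $\Psi\mapsto\Psi(t\,\cdot)$, which suggests looking for $\Psi(r\theta)=r^{\beta}\psi(\theta)$ with $r=|x|$, $\theta=x/|x|\in\sigma$. Writing $D^2\big(r^{\beta}\psi(\theta)\big)=r^{\beta-2}H_\beta[\psi](\theta)$, where in an orthonormal frame with the radial direction first
\[
H_\beta[\psi]=\begin{pmatrix}\beta(\beta-1)\,\psi & \beta\,(D_\sigma\psi)^t\\[2pt]\beta\,D_\sigma\psi & \beta\,\psi\, I_{n-1}+D^2_\sigma\psi\end{pmatrix},
\]
$D_\sigma,D^2_\sigma$ denoting the gradient and Hessian on $S^{n-1}$, the problem becomes $L_\beta[\psi]:=\mathcal{M}^-(H_\beta[\psi])=0$ in $\sigma$, $\psi>0$ in $\sigma$, $\psi=0$ on $\partial\sigma$. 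Since the second derivatives of $\psi$ enter $H_\beta$ only through the lower right $(n-1)\times(n-1)$ block, $L_\beta$ is a fully nonlinear operator on the compact manifold with boundary $\overline\sigma\subset S^{n-1}$ which is uniformly elliptic (with the same constants $\lambda,\Lambda$) in $D^2_\sigma\psi$ and depends continuously on $\beta\ge0$.

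For existence of $\beta$, let $\mu(\beta)$ denote the generalized principal (Berestycki--Nirenberg--Varadhan) eigenvalue of the $1$-homogeneous operator $L_\beta$ on $\sigma$ with zero Dirichlet data; by the nonlinear Krein--Rutman theory for Pucci-type operators, $\mu(\beta)$ is well defined, simple and attained by a positive eigenfunction vanishing on $\partial\sigma$, and $\beta\mapsto\mu(\beta)$ is continuous. A $\beta$-homogeneous solution exists exactly when $\mu(\beta)=0$. I would locate a zero of $\mu$ by the intermediate value theorem. At $\beta=0$ one has $H_0[\psi]=\mathrm{diag}\big(0,D^2_\sigma\psi\big)$, and from the elementary bound $\mathcal{M}^-(M)\le\lambda\,\mathrm{tr}(M)$ we get $L_0\le\lambda\,\Delta_{S^{n-1}}$ as operators on $\sigma$, whence $\mu(0)\ge\lambda\,\lambda_1(\sigma)>0$, where $\lambda_1(\sigma)>0$ is the first Dirichlet eigenvalue of $-\Delta_{S^{n-1}}$ on $\sigma$ (positive since $\sigma\subsetneq S^{n-1}$). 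On the other hand $\mu(\beta)<0$ for $\beta$ large. Then there is $\beta>0$ with $\mu(\beta)=0$, and $\Psi(r\theta):=r^{\beta}\psi(\theta)$ with $\psi$ the corresponding eigenfunction is a $\beta$-homogeneous positive viscosity solution of $\mathcal{M}^-(D^2\Psi)=0$ in $\C$; by the interior and boundary regularity recalled in Section~\ref{sect-regularity} it is $C^{1,\alpha}$ up to $\partial\C\setminus\{0\}$ and continuous on $\overline\C$, with $\Psi=0$ on $\partial\C$.

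For uniqueness there are two points. First, at most one exponent occurs: if $\Psi_i=r^{\alpha_i}\psi_i$, $i=1,2$, were two positive solutions vanishing on $\partial\C$ with $\alpha_1<\alpha_2$, then Hopf's lemma on the smooth part of $\partial\C$ gives $\psi_1\le M\psi_2$ on $\overline\sigma$ for $M$ large, hence $\Psi_1\le M\Psi_2$ on $\partial(\C\cap B_1)$; the comparison principle for $\mathcal{M}^-$ in the bounded domain $\C\cap B_1$ then yields $\Psi_1\le M\Psi_2$ there, i.e.\ $\psi_1(\theta)/\psi_2(\theta)\le M r^{\alpha_2-\alpha_1}$ for all $0<r\le1$, which is impossible as $r\to0$. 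Second, the profile is unique up to a constant: given two $\beta$-homogeneous solutions $\Psi,\widetilde\Psi$, the ratio $\widetilde\Psi/\Psi=\widetilde\psi/\psi$ is independent of $r$ and, by Hopf's lemma and boundary regularity, bounded above and below on $\sigma$, so $t^\star:=\sup\{t>0:\ t\Psi\le\widetilde\Psi\text{ in }\C\}\in(0,\infty)$. The function $w=\widetilde\Psi-t^\star\Psi\ge0$ satisfies $\mathcal{M}^-(D^2w)\le0$ in $\C$ (by superadditivity of $\mathcal{M}^-$); if $w$ vanishes at an interior point the strong maximum principle for $\mathcal{M}^-$ forces $w\equiv0$, and if not, maximality of $t^\star$ forces the inner normal derivative of $w$ to vanish at some point of $\partial\C\setminus\{0\}$, contradicting Hopf's lemma. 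Hence $\widetilde\Psi=t^\star\Psi$.

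The main obstacle is the assertion $\mu(\beta)<0$ for $\beta$ large, together with setting up the theory of $\mu(\beta)$. The reduced operator $L_\beta$ couples to the ambient radial direction through the off-diagonal block $\beta\,D_\sigma\psi$, and near $\partial\sigma$ the radial--normal $2\times 2$ block of $H_\beta[\psi]$ becomes indefinite with eigenvalues of order $\pm\beta$. As a consequence the obvious candidate $r^{\beta}\psi_\sigma(\theta)$ built from the first spherical eigenfunction is a \emph{strict supersolution} of $\mathcal{M}^-(D^2\cdot)=0$ near $\partial\C$ (and a subsolution near the axis), so it fails to deliver the required upper bound on $\mu(\beta)$; producing a boundary-layer barrier that does, or otherwise excluding $\mu(\beta)\ge0$ for all $\beta$, is where the real work lies. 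A fallback for this step is a compactness argument: solve $\mathcal{M}^-(D^2u_R)=0$ in $\C\cap B_R$ with $u_R=0$ on $\partial\C$, $u_R>0$, normalized by $u_R(\theta_0)=1$ at a fixed $\theta_0\in\sigma$ on the unit sphere, pass to the limit $R\to\infty$ using interior Harnack and $C^{1,\alpha}$ estimates to obtain a positive $\mathcal{M}^-$-harmonic $u_\infty$ in $\C$ with $u_\infty=0$ on $\partial\C$ and $u_\infty(0)=0$, and then upgrade $u_\infty$ to a homogeneous function, either via a uniqueness statement for such normalized solutions (which turns scale invariance into $h(ts)=h(t)h(s)$ for $h(t)=u_\infty(t\theta_0)$, forcing $h(t)=t^{\beta}$ with $\beta>0$) or via a sub/supermultiplicativity estimate for $r\mapsto\sup_{\partial B_r\cap\C}u_\infty$.
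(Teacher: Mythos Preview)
The paper does not prove this proposition at all: it is quoted verbatim as ``part of Theorems 1.1 and 1.2 in \cite{ASS}'', and the only additional information given is the variational characterization \eqref{defbeta} of $\beta$ as the supremum of homogeneities of positive supersolutions. So there is nothing to compare your argument against within this paper; you are reconstructing a result that the authors import wholesale.

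On the substance of your attempt: the separation-of-variables reduction and the principal-eigenvalue framework are natural, and your uniqueness arguments (for the exponent via comparison in $\C\cap B_1$, and for the profile via the sliding constant $t^\star$) are essentially correct, modulo some care at the vertex of the cone when invoking Hopf. The genuine gap is exactly the one you flag yourself: showing $\mu(\beta)<0$ for large $\beta$. Your own analysis explains why this is not routine --- the off-diagonal block $\beta\,D_\sigma\psi$ makes the reduced operator behave badly near $\partial\sigma$, and the obvious trial function $r^\beta\psi_\sigma$ is on the wrong side. Without this step the intermediate-value argument does not close, so as written the existence half of your proof is incomplete.

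Your fallback compactness route (solve in $\C\cap B_R$, normalize, let $R\to\infty$, then extract homogeneity from scale invariance and uniqueness) is much closer to how this is actually done in \cite{ASS}, and is consistent with the characterization \eqref{defbeta} quoted in the paper. If you want a self-contained proof, that is the direction to pursue; the eigenvalue approach can be made to work too, but the missing sign of $\mu(\beta)$ requires a genuine barrier construction, not just a trial function.
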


\begin{remark}
Studying the proof of this result in \cite{ASS} it is possible to see that it extends, with almost the same proof, to the case when $\sigma$ is only Lipschitz, such as the intersection of a quarter space with $S^{n-1}$. This fact could simplify even further the proof below, but we will not use it, for the readers' convenience. Note also that the number $\beta$ in Proposition \ref{fundyex} is equal to $-\alpha^-$ in the notations of \cite{ASS}, and
\begin{multline} \label{defbeta}
\beta : = \sup\left\{ \tilde{\beta} > 0 :  \ \mbox{there exists a } \tilde{\beta}- \mbox{homogeneous supersolution } \Phi\in \ C(\overline{\C}) \right. \\
\left. \mbox{of } \ \mathcal{M}_{\lambda, \Lambda}^-(D^2\Phi)  \leq 0 \ \mbox{and} \ u>0 \ \mbox{ in } \C \right\}
\end{multline}
\end{remark}

The following easy lemma says that the homogeneity of the function $\Psi$ from Proposition \ref{fundyex} is close to two when $\C$ is close to a quarter-space and the fully nonlinear operator is close to the Laplacian.

We denote with $\Pi$ the quarter-space (intersection of two half-spaces)
 $$
 \Pi = \{ x=(x_1,\ldots, x_n)\in \rn\::\: x_1>0, \;x_n>0\}.
 $$
Set $\pi = \Pi\cap S^{n-1}$, that is, $\Pi = \R_+^* \cdot \pi$.

\begin{lemma} \label{lemma-almostcorner} Let $\sigma_m$ be an increasing sequence of smooth subdomains of $\pi$ such that $\sigma_m\to\pi$ as $m\to \infty$. Let $\Psi_m$ be the $\beta_m$-homogeneous function given by Proposition~\ref{fundyex}, applied to the operator $\mathcal{M}_{\lambda, \lambda(1+1/m)}$ in $\C_m=\R_+^*\cdot \sigma_m$. Then $\beta_m\to 2$ as $m\to\infty$.
\end{lemma}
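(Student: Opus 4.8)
The plan is to prove this by combining two monotonicity/continuity facts about the exponent $\beta$: its stability as the cone opens up to the limiting quarter-space, and its stability as the operator degenerates to the Laplacian. The key is the variational characterization of $\beta$ given in \eqref{defbeta} as the supremum of homogeneity degrees of positive supersolutions.

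\medskip

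First I would establish the upper bound $\limsup_m \beta_m \le 2$. Since $\sigma_m$ is increasing and $\sigma_m \to \pi$, we have $\C_m \subset \C_{m+1} \subset \Pi$ for each $m$, and moreover any compact subcone of $\Pi$ is eventually contained in $\C_m$. The function $\Psi(x) = x_1 x_n$ is harmonic, $2$-homogeneous, positive in $\Pi$ and vanishes on $\partial\Pi$; in particular it is a positive supersolution of $\mathcal{M}^-_{\lambda,\Lambda}(D^2 \Psi)\le 0$ on $\Pi$ (indeed with equality for the Laplacian, hence $\le 0$ for any $\mathcal{M}^-$ since the Pucci minimal operator is below the Laplacian on any matrix). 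To use it as a competitor for $\beta_m$ one must be slightly careful that $\Psi$ does not vanish on all of $\partial \C_m$, but $\C_m \subset \Pi$ so on $\partial \C_m \cap \Pi$ the function $\Psi$ is strictly positive, which is harmless — $\Psi$ restricted to $\overline{\C_m}$ is still a positive supersolution in $\C_m$ in the sense required, giving $\beta_m \ge 2$ is NOT what we want here; rather, to bound $\beta_m$ from above we compare $\Psi_m$ to a positive supersolution on the \emph{larger} domain. Concretely, a $\beta_m$-homogeneous positive supersolution $\Phi$ of $\mathcal{M}^-_{\lambda,\lambda(1+1/m)}(D^2\Phi)\le 0$ on $\C_m$ that vanishes on $\partial \C_m$ can be compared, via the maximum principle on the truncated region $\C_m \cap (B_R \setminus B_\rho)$, against $t\Psi$ for suitable $t$; since $\Psi > 0$ on $\overline{\C_m}\setminus\{0\}$ (as $\C_m$ is a \emph{proper} subcone of $\Pi$, its closure avoids $\partial\Pi$), one gets $\Phi \le C \Psi$ near the origin, forcing $\beta_m \ge 2$ from homogeneity. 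Wait — that direction gives a lower bound. I should instead run the comparison the other way: because $\Psi_m$ vanishes on $\partial\C_m$ while the harmonic function associated to the \emph{opening} of the cone controls it from above. The cleanest route for the upper bound is: fix a slightly larger smooth cone $\C' \supset \overline{\Pi}\setminus\{0\}$ with its own exponent $\beta' < 2$ (by monotonicity of $\beta$ in the domain, larger cone gives smaller exponent), note $\C_m \subset \C'$, and use that $\beta$ is monotone decreasing under domain inclusion together with monotone in $\Lambda$ — the operator $\mathcal{M}^-_{\lambda,\lambda(1+1/m)}$ sits between $\lambda\Delta$ and $\Delta$ — to sandwich $\beta_m$.

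\medskip

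So the core of the proof is really two monotonicity lemmas plus a continuity argument, which I would organize as follows. Step 1: $\beta$ is nonincreasing with respect to inclusion of cones (a larger cone has more competitors among supersolutions whose existence is obstructed, hence the supremum in \eqref{defbeta} can only decrease), and nonincreasing in $\Lambda$ for fixed $\lambda$ (a larger $\Lambda$ makes $\mathcal{M}^-_{\lambda,\Lambda}$ smaller, so the supersolution inequality is harder to satisfy, shrinking the admissible set). Step 2: For the Laplacian on the exact quarter-space $\Pi$, the exponent is exactly $2$, witnessed by $x_1x_n$; by Proposition~\ref{fundyex} applied to $\lambda\Delta = \mathcal{M}^-_{\lambda,\lambda}$ on $\Pi$ (or its smooth approximations) the value is still $2$ by homogeneity-invariance under scalar multiples of the operator. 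Step 3 (upper bound): Since $\C_m \subset \Pi$ and $\mathcal{M}^-_{\lambda,\lambda(1+1/m)} \le \lambda\Delta$ pointwise on $\S_n$, monotonicity gives $\beta_m \ge \beta(\Pi, \lambda\Delta) = 2$. Step 4 (lower bound, i.e. $\limsup \le 2$): Pick any $\eps > 0$; choose a smooth cone $\sigma' \supset \overline\pi$ with $\beta(\C', \Delta) < 2 + \eps$ (possible since $\beta$ depends continuously on the cone for the Laplacian — here $\beta$ for harmonic functions is the classical first eigenvalue exponent of the spherical Laplacian on $\sigma$, which varies continuously). Then for $m$ large, $\C_m \subset \C'$, and since $\mathcal{M}^-_{\lambda,\lambda(1+1/m)} \ge \mathcal{M}^-_{\lambda,\Lambda_0}$ is not directly comparable to $\Delta$... — here I would instead use that $\mathcal{M}^-_{\lambda,\lambda(1+1/m)}(D^2\Phi) \le 0$ implies $\lambda(1+1/m)\Delta\Phi \le \mathcal{M}^-_{\lambda,\lambda(1+1/m)}(D^2\Phi) \le 0$ is false in general; the correct inequality is $\mathcal{M}^-_{\lambda,\Lambda}(M) \le \lambda \,\mathrm{tr}(M)$ only when... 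Let me restate: for any $M$, $\mathcal{M}^-_{\lambda,\Lambda}(M) \le \lambda \sum_{\mu_k>0}\mu_k + \lambda\sum_{\mu_k<0}\mu_k \cdot (\Lambda/\lambda)$, so $\mathcal{M}^-_{\lambda,\Lambda}(M) \ge \lambda\,\mathrm{tr}(M)$ when $M\ge 0$-ish. The robust statement is simply $\mathcal{M}^-_{\lambda,\Lambda} \le \mathcal{M}^+_{\lambda,\Lambda}$ and both converge to $\lambda\Delta$ as $\Lambda \to \lambda$. So for the upper bound I would argue by a compactness/stability argument: if $\beta_m \not\to 2$, pass to a subsequence with $\beta_m \to \beta_\infty \neq 2$; the functions $\Psi_m$, suitably normalized (say $\max_{\overline{\C_m}\cap S^{n-1}}\Psi_m = 1$), are uniformly bounded solutions of $\mathcal{M}^-_{\lambda,\lambda(1+1/m)}(D^2\Psi_m)=0$; by the interior $C^{1,\alpha}$ estimates (Proposition~\ref{c1alpha}-type) and the boundary regularity in \cite{ASS}/\cite{SS}, they converge locally uniformly on $\overline\Pi\setminus\{0\}$ to a $\beta_\infty$-homogeneous function $\Psi_\infty \ge 0$ solving $\lambda\Delta\Psi_\infty = 0$ in $\Pi$ with $\Psi_\infty = 0$ on $\partial\Pi$ and $\Psi_\infty \not\equiv 0$ (the normalization and Harnack prevent collapse). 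By uniqueness in Proposition~\ref{fundyex} for the Laplacian on $\Pi$ (equivalently, classical separation of variables: the only such function is $c\,x_1x_n$), $\beta_\infty = 2$, a contradiction.

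\medskip

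The main obstacle I anticipate is the last step — the passage to the limit and ruling out degeneration of $\Psi_m$. Two things need care: (a) the uniform boundary regularity of $\Psi_m$ up to $\partial\C_m\setminus\{0\}$ as the domain $\C_m$ moves (one needs the estimates from \cite{ASS} to be uniform in $m$, which should follow since the cones $\C_m$ have boundary regularity degenerating only at the corner $\partial\C_m\cap\partial\Pi$, away from $0$ — but near the edge where $\sigma_m$ meets $\pi$ some uniformity must be extracted, perhaps by the Lipschitz version of Proposition~\ref{fundyex} mentioned in the remark, or by a barrier argument); and (b) showing $\Psi_\infty\not\equiv 0$, i.e. the normalized sequence does not concentrate all its mass near the corner and vanish in the limit — this is handled by choosing the normalization point on a fixed compact subset of $\Pi$ bounded away from $0,\infty$ and from $\partial\Pi$, using interior Harnack to keep $\Psi_m$ bounded below there, which is legitimate because eventually that compact set lies inside $\C_m$. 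Once those are in place, the homogeneity degree passes to the limit because $\beta_m$-homogeneity is a closed condition under locally uniform convergence, and uniqueness for the flat quarter-space pins $\beta_\infty=2$. Combined with the easy lower bound $\beta_m\ge 2$ from Step 3 (monotonicity in the domain: $\C_m\subset\Pi$ and the fact that the Laplacian furnishes the $2$-homogeneous barrier $x_1x_n$, together with the operator convergence), this yields $\beta_m\to 2$.
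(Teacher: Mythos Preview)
Your eventual argument --- normalize $\Psi_m$, use compactness to pass to a locally uniform limit, identify the limit as the unique positive harmonic function on $\Pi$ vanishing on $\partial\Pi$, namely $x_1x_n$, and read off $\beta_\infty=2$ --- is exactly what the paper does. So the core approach is the same.

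That said, the paper's execution is much cleaner than your plan, and several of your detours and worries are unnecessary. First, the entire monotonicity discussion (your Steps~1--4) is superfluous: the paper uses only that $\beta_m$ is nonincreasing in $m$ (one line from \eqref{defbeta}), which guarantees $\lim\beta_m$ exists; no separate upper/lower bounds are needed, and your attempts to sandwich $\beta_m$ via Pucci--Laplacian inequalities were, as you noticed yourself, going in circles. Second, your obstacle (a) about uniform boundary regularity is resolved in the paper by the observation that the H\"older and Harnack estimates depend only on the \emph{uniform exterior cone condition}, which the domains $\sigma_m\subset\pi$ enjoy uniformly; no appeal to the Lipschitz version of Proposition~\ref{fundyex} or to \cite{SS} is needed. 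Third, your obstacle (b) is handled simply by normalizing $\Psi_m(x_0)=1$ at a single fixed point $x_0\in\Pi$ (eventually in every $\C_m$); interior Harnack then keeps $\Psi_m$ uniformly bounded on each compact subset, and the stability of viscosity solutions under uniform convergence lets you pass to the limit in $\mathcal{M}^-_{\lambda,\lambda(1+1/m)}(D^2\Psi_m)=0$ to get $\Delta\Psi_\infty=0$.

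In short: your final paragraph is the whole proof, and the two concerns you flag there are resolved by standard uniform estimates. Everything before it can be discarded.
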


\begin{proof} Note that $\beta_m$ is nonincreasing, by \re{defbeta} and the definition of $\mathcal{M}_{\lambda, \Lambda}^-$.
If we normalize $\Psi_m$ so that  $\Psi_m(x_0)=1$ for a fixed point $x_0\in\Pi$, we can use the Harnack inequality and the elliptic H\"older estimates (the constants in these estimates depend only on the uniform exterior cone condition) to
 \beeq\label{local1}
 \mathcal{M}_{\lambda, \lambda(1+1/m)}(D^2\Psi_m)=0,
 \eeq
 and deduce that $\|\Psi_m\|_{C^\alpha(K)}\le C(K)$ for each compact subset $K$ of $\pi$ and all large $m$, where $C(K)$ is a constant independent of $m$.
  Hence,  using the stability properties of viscosity solutions with respect to uniform convergence, we can pass to the limit in \re{local1} and conclude that $\Psi_m$ converges locally uniformly as $m\to \infty$ to the unique (up to a multiplication by a constant) positive harmonic function in $\Pi$ which vanishes on $\partial\Pi$. Of course, this function is $x_1x_n$ and its homogeneity is two.
\end{proof}

The next proposition is essentially  Theorem 1.4 in \cite{ASS}.
\begin{prop}\label{hopf} Let $\sigma\subset S^{n-1}$ be open and smooth, $\C = \R_+^* \cdot \sigma$, and  $\C_0:=\C\cap B_{\epsilon_0}$ for some $\epsilon_0>0$. Assume $\beta\ge 1$, where $\beta$ is the number defined above for the cone $\C$.  Let $\Sigma_0$ be a domain such that $0\in \partial \Sigma_0$ and $\Sigma_0$ is $C^2$-diffeomorphic to $\C_0$. If  $w\in C(\bar \Sigma_0)$ is nonnegative and satisfies
\beeq\label{ineqa}
\mathcal{M}_{\lambda, \Lambda}^-(D^2 w) - k|D w| -lw\le 0\quad \mbox{ in }\;\Sigma_0
\eeq
in the viscosity sense, then
 either $w\equiv0$ in $\Sigma_0$ or
\beeq\label{ineq2}
\liminf_{t\searrow 0} \frac{w(te)}{t^{\beta}}>0,
\eeq
for any  direction $e\in S^{n-1}$ which enters $\Sigma_0$.
\end{prop}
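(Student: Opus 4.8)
The plan is to prove Proposition~\ref{hopf} by combining the homogeneous barrier $\Psi$ from Proposition~\ref{fundyex} with the strong maximum principle for the operator $\mathcal{M}_{\lambda,\Lambda}^-(D^2\cdot)-k|D\cdot|-lw$. The dichotomy ``$w\equiv 0$ or $w$ is nondegenerate of order $\beta$'' is the standard Hopf-type conclusion, and the number $\beta$ is exactly the homogeneity that governs how fast a positive supersolution can decay at the vertex of the cone $\C$. Since we are assuming this proposition is essentially Theorem~1.4 of \cite{ASS}, I will describe the barrier argument that proves it directly.

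\medskip
\noindent\textbf{Step 1: Reduce to a clean cone.}
Since $\Sigma_0$ is $C^2$-diffeomorphic to $\C_0=\C\cap B_{\epsilon_0}$ and $0\in\partial\Sigma_0$, the diffeomorphism maps $\C_0$ onto $\Sigma_0$ with the vertex fixed; its differential at $0$ is a linear isomorphism, so for the purpose of the infinitesimal statement \re{ineq2} I may first work on the straight cone $\C$ and then transfer the conclusion. More precisely: if $w\not\equiv 0$, then by the strong maximum principle applied to the uniformly elliptic inequality \re{ineqa} in the interior of $\Sigma_0$, we have $w>0$ in $\Sigma_0$. Fix a small spherical cap $\sigma'\Subset\sigma$ so that the sub-cone $\C'=\R_+^*\cdot\sigma'$ has its truncation $\C'\cap B_{r_0}$ contained (after composing with the diffeomorphism, slightly shrinking) in $\Sigma_0$ for some $r_0>0$; the corresponding homogeneity exponent $\beta'$ of Proposition~\ref{fundyex} for $\C'$ satisfies $\beta'\ge\beta$, which is the wrong direction, so instead I keep $\C$ itself and use that the diffeomorphism distorts the cone only by a bounded linear factor near $0$, hence a slightly larger straight cone $\widetilde{\C}\supset\C$ still fits inside $\Sigma_0$ near the vertex after rescaling; its exponent $\widetilde\beta\le\beta$. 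Working with $\widetilde\beta$ in place of $\beta$ would prove a weaker statement, so the correct route is to build the barrier on $\C$ directly and control the diffeomorphism error terms — see Step~3.

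\medskip
\noindent\textbf{Step 2: Construct a subsolution barrier.}
Let $\Psi$ be the $\beta$-homogeneous function from Proposition~\ref{fundyex} for the cone $\C$, normalized so $\|\Psi\|_{L^\infty(\C\cap B_1)}=1$. For a parameter $\mu>\beta$ (to be chosen close to $\beta$), set $\Phi=\Psi-\eta|x|^{\mu}\chi$, where $\chi$ is a smooth cutoff supported in $\C$ and $\eta>0$ is small; a cleaner choice is $\Phi = |x|^\beta\phi(x/|x|) - \delta |x|^{\beta+\epsilon}$ for suitable small $\epsilon,\delta$, where $\phi=\Psi|_{S^{n-1}}$. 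Using the ellipticity estimate \re{elip}-type bound for $\mathcal{M}^-$, homogeneity, and $\beta\ge 1$, one computes
\[
\mathcal{M}_{\lambda,\Lambda}^-(D^2\Phi)-k|D\Phi|-l\Phi \ge c_1\delta|x|^{\beta+\epsilon-2} - C_1|x|^{\beta-1} - C_2|x|^\beta \ge 0
\]
for $|x|$ small, because the leading term from $|x|^\beta\phi$ against $\mathcal{M}^-$ vanishes exactly (that is the content of Proposition~\ref{fundyex}), the gradient and zeroth-order terms are lower order ($|x|^{\beta-1}$ and $|x|^\beta$ versus the gained negative-power term $|x|^{\beta+\epsilon-2}$, which dominates as $|x|\to 0$ once $\epsilon<2$), so $\Phi$ is a subsolution of \re{ineqa} in $\C\cap B_r$ for $r$ small. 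Moreover $\Phi\le 0$ on $\partial\C$ and on $|x|=r$ we can arrange $\Phi\le \kappa\,\Psi(x) \le C r^\beta$.

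\medskip
\noindent\textbf{Step 3: Comparison and conclusion.}
On $\partial(\Sigma_0\cap B_r)$ we have two pieces: on $\partial\Sigma_0$, $w\ge 0\ge \text{(a small multiple of }\Phi\text{)}$; on $\Sigma_0\cap\partial B_r$, since $w>0$ is continuous and bounded below by a positive constant there, we can pick a constant $\theta>0$ with $\theta\Phi\le w$ on that sphere. Transferring $\Phi$ through the $C^2$-diffeomorphism $\Sigma_0\to\C_0$ produces a function $\widehat\Phi$ on $\Sigma_0$ that is still a subsolution of \re{ineqa} near $0$ — here one uses that the diffeomorphism is $C^2$, so its Jacobian is $C^1$ and close to a constant invertible matrix near $0$, and the transformed operator is again uniformly elliptic with a first-order term, the extra error terms being of order $|x|^{\beta-1}$ which are again dominated by the $\delta|x|^{\beta+\epsilon-2}$ gain. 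By the comparison principle for the proper (because $l\ge0$) uniformly elliptic operator in \re{ineqa}, $w\ge\theta\widehat\Phi$ in $\Sigma_0\cap B_r$. Taking $x=te$ with $e$ entering $\Sigma_0$ and $t\to 0$, and using $\widehat\Phi(te)\ge c\,t^\beta$ (which holds because $\phi(e')>0$ for the relevant direction and the $|x|^{\beta+\epsilon}$ correction is lower order), we get $\liminf_{t\to0} w(te)/t^\beta \ge \theta c>0$, which is \re{ineq2}.

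\medskip
\noindent\textbf{Main obstacle.}
The delicate point is Step~2/Step~3: one must choose the perturbation exponent $\epsilon$ and coefficient $\delta$ so that the negative power $|x|^{\beta+\epsilon-2}$ produced by feeding $-\delta|x|^{\beta+\epsilon}$ into $\mathcal{M}^-$ genuinely dominates \emph{both} the $k|D\Phi|$ term (order $|x|^{\beta-1}$) \emph{and} the curvature/diffeomorphism error terms, uniformly as $|x|\to0$; since $\beta\ge1$ we have $\beta+\epsilon-2<\beta-1$ exactly when $\epsilon<1$, so the gained term $|x|^{\beta+\epsilon-2}$ has a \emph{more} singular (larger) exponent than $|x|^{\beta-1}$, hence dominates near $0$ — but this requires that $\mathcal{M}^-(D^2(|x|^\mu\phi))$ really does have a strictly negative leading coefficient for $\mu>\beta$, which follows from the variational characterization \re{defbeta} of $\beta$ (supersolutions of negative power strictly greater than $\beta$ cannot exist, so the sign is forced). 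Handling the diffeomorphism carefully — making sure the transported barrier remains a genuine subsolution and not merely an approximate one — is the place where the $C^2$-regularity of $\partial\Sigma_0$ is essential and is the step I expect to require the most care.
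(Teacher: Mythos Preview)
Your barrier construction in Step~2 has a sign error that cannot be repaired within the scheme you propose. With $\Phi=\Psi-\delta|x|^{\beta+\epsilon}$, the perturbation $-\delta|x|^{\beta+\epsilon}$ has a \emph{negative} definite Hessian (since $\beta+\epsilon>1$), so by the sub/superadditivity of the Pucci operators,
\[
\mathcal{M}_{\lambda,\Lambda}^-(D^2\Phi)\le \mathcal{M}_{\lambda,\Lambda}^-(D^2\Psi)+\mathcal{M}_{\lambda,\Lambda}^+\bigl(-\delta D^2|x|^{\beta+\epsilon}\bigr)=0-\delta\lambda(\beta+\epsilon)(\beta+\epsilon+n-2)|x|^{\beta+\epsilon-2}<0.
\]
Hence $\Phi$ is a \emph{supersolution} of $\mathcal{M}^-_{\lambda,\Lambda}=0$, and a fortiori of the full operator with the negative lower-order terms; it cannot serve as a lower barrier for $w$. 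The alternative you float in the ``Main obstacle'' paragraph, namely $|x|^\mu\phi$ with $\mu>\beta$, does not help either: the characterization \re{defbeta} only says such a function is not a supersolution \emph{somewhere} in the cone, not that $\mathcal{M}^-_{\lambda,\Lambda}(D^2(|x|^\mu\phi))$ has a definite sign pointwise. So there is no single explicit homogeneous (or nearly homogeneous) subsolution available here, and this is exactly why the paper does \emph{not} try to build one.

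The paper's argument is structurally different: it is an iterative dyadic comparison. After reducing to the straight cone $\C$ (the $C^2$-diffeomorphism simply transforms \re{ineqa} into an inequality of the same type with modified constants, so one works on $\C_0$ directly), one exploits that the rescaled function $w_r(x)=r^{-\beta}w(rx)$ satisfies $\mathcal{M}^-_{\lambda,\Lambda}(D^2w_r)-kr|Dw_r|-lr^2w_r\le 0$ on $\C\cap B_4$, i.e.\ the lower-order terms carry the small factor $r$. For each $r$ one solves the Dirichlet problem $\mathcal{M}^-_{\lambda,\Lambda}(D^2\psi_r)-kr|D\psi_r|-lr^2\psi_r=0$ in $\C\cap B_4$ with boundary data $\Psi$; ABP and boundary Lipschitz estimates give $|\psi_r-\Psi|\le Cr\,\mathrm{dist}(\cdot,\partial\C)$, hence $\psi_r\ge(1-Cr)\Psi$ on the annulus $\C\cap(B_2\setminus B_1)$. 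Comparison of $w_r$ with $q(2r)\psi_r$ (where $q(r)=\inf_{\C\cap(B_{2r}\setminus B_r)}w/\Psi$) then yields $q(r)\ge(1-Cr)q(2r)$, and iterating over dyadic scales gives $\inf_{r>0}q(r)>0$, which is \re{ineq2}. The point is that the barrier $\psi_r$ is produced by \emph{solving} a problem at each scale rather than written down explicitly, and the smallness of $r$ absorbs the lower-order terms.
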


\begin{remark}
Note that if $\Sigma_0$ is $C^2$-smooth around the origin then the corresponding cone $\C$ is a half space, say $\C=\{x_n=0\}$, so $\Psi= x_n$ and $\beta=1$, independently of $\lambda,\Lambda$. Then Proposition \ref{hopf} becomes the statement of the usual and well-known Hopf lemma.
\end{remark}

Proposition \ref{hopf} can be proved with practically the same proof as Theorem 1.4 in \cite{ASS}, since the first and zero order terms in \re{ineqa} ``scale out" when we zoom into the origin. However, since the proof in \cite{ASS} is done in conjunction with other results in that paper, and is thus not simple to follow, for the reader's convenience we present a simpler and self-contained proof of Proposition \ref{hopf}, in the Appendix below. We note the assumption $\beta\ge1$ in Proposition \ref{hopf} can be removed, see \cite{ASS}.

\begin{proof}[Proof of  Theorem \ref{theoperturb}]
We follow the moving planes method as explained in section~\ref{sect-movingplanes}. We need to show that the alternative (ii) (the corner situation) cannot happen. Without loss of generality we assume that $\lst=0$, $Q$ is the origin in $\rn$,  and we will write $\bar x$ instead of $x^{\lst}$; see the notation of section \ref{sect-movingplanes}.

Recall that in the corner situation for the moving plane method we have a set $\Sigma$ which around the origin is $C^2$-diffeomorphic to a neighborhood of the origin in $\Pi$. Obviously we can assume $\Pi$ is determined by $T$ and the tangent plane to $\partial \Omega$ at the origin.

Recall also that we have a function $w:\Sigma \to \R_+$ (we drop the subscript $\lst=0$) which is nonnegative in a domain $\Sigma$ with a right-angle corner point at the origin, and vanishes at this point together with its derivatives up to order two (the latter is to be understood in the sense of Proposition \ref{prop-expansion-on-the-boundary}, in case $u$ is not $C^2$ close to the boundary of $\Omega$). In addition, \re{equa2} holds thanks to Proposition \ref{prop-expansion-on-the-boundary}, that is
\beeq\label{ineq1}
w(x)\le C|x|^{2+\alpha}
\eeq
for any $x \in \Sigma$.
We will now contradict this inequality with the help of Lemma \ref{lemma-almostcorner}.

The set $\Sigma$ is $C^2$-diffeomorphic to  a straight corner around the origin. Thus, we can find a sequence of smooth cones $\C_m = \R_+ \cdot \sigma_m$, such that $\sigma_m\to\pi$ from inside, and there is $r_m>0$ such that $\C_m \cap B_r \subset \Sigma \cap B_r$ for all  $r\in (0,r_m)$.

From Lemma \ref{lemma-almostcorner}, for each $\C_m$ we have a function $\Psi_m$ which is  homogeneous of degree $\beta_m$, and $\beta_m \to 2$ as $m \to \infty$.
Recall that for some $k,l\ge 0$
$$
\mathcal{M}_{\lambda, \Lambda}^-(D^2w) -k|D w| -lw\le 0\qquad \mbox{in }\; \Sigma.
$$
Applying Proposition \ref{hopf} in each $\C_m \cap B_r$, we obtain that $w(te) \geq c_mt^{\beta_m}$ for each direction $e$ that enters $\C_m$. But this is a contradiction with \eqref{ineq1} when $m$ is fixed so large that $\beta_m < 2+\alpha$, and $t$ is very small.
\end{proof}

\section{Proof of Theorem \ref{theoc1}}
\label{sect-proofC1}

In this section we prove the first of our main theorems. The argument from the preceding section will play an important role here; we will actually reduce the proof of Theorem \ref{theoc1} to that of Theorem \ref{theoperturb}.

\begin{proof}[Proof of Theorem \ref{theoc1}]
Again, we need to find a contradiction in the case (ii), the corner situation, in the moving planes method explained in section \ref{sect-movingplanes}. Without loss of generality we assume that $\lst=0$, $Q$ is the origin in $\rn$,  and we write $\bar x$ instead of $x^{\lst}$.

For every symmetric matrix $M=(m_{ij})\in \S_n$ we denote with $\ov{M}$ the matrix with entries $\ep_{ij}m_{ij}$ where $\ep_{11}= 1$, $\ep_{ij}= 1$ if $i,j\ge2$, and $\ep_{1j}= -1$ if $j\not=1$. Observe that spec$(M) =$ spec$(\ov{M})$, so $F(M,p)  = F(\ov{M},p)$ for each $M\in \S_n$, $p\in \R_+$.

We know that $u\in C^{2,\alpha}(\Omega_\delta)$, for some $\delta$-neighbourhood of the boundary $\partial \Omega$ in $\Omega$ (this is by hypothesis if $(H2)^\prime$ is assumed, and follows from Proposition \ref{bdaryC2alpha} in case we assume (H2)). We set $\Sigma_0= \Sigma\cap B_{\delta/2}$.

 Next, note  that we have
$$
D^2v(x) = \ov{D^2u(\bar x)}\quad\mbox{ for each }\; x\in\ov{\Sigma_0},
$$
and in particular
$$
D^2v = \ov{D^2u} \quad\mbox{ on the hyperplane } T_0\cap\partial\Sigma_0 .
$$

Since $F$ is continuously differentiable and $f$ is locally Lipschitz, the function $w=v-u$ is a solution of a linear equation
\beeq\label{fiu}
\mathrm{tr}(A(x)D^2w) + \lsc b(x),D w\rsc + c(x) w = 0\quad\mbox{ in }\; {\Sigma_0}
\eeq
where $b,c\in L^\infty(\Sigma_0)$ and the matrix $A=(a_{ij})$ has continuous entries in $\ov{\Sigma_0}$,
\begeqaet
2a_{ij} (x) &=& 2 \int_0^1 \frac{\partial F}{\partial m_{ij}} (tD^2u(x) +  (1-t) D^2v(x))\,dt\\
&=&   \int_0^1 \frac{\partial F}{\partial m_{ij}} (tD^2u(x) +  (1-t) D^2v(x))+ \frac{\partial F}{\partial m_{ij}} (tD^2v(x) +  (1-t) {D^2u(x)}).
\eneqaet

Since $F(M)  = F(\ov{M})$ implies $\displaystyle\frac{\partial F}{\partial m_{1j}}(M) + \frac{\partial F}{\partial m_{1j}}(\ov{M})=0$ for  $j>1$, we get
\beeq\label{tre}
a_{1j} = 0 \quad\mbox{ on the hyperplane } T_0\cap\partial\Sigma_0 , \mbox{ for each } j>1.
\eeq

\begin{remark}
If $F(M)$ is in addition assumed to be in $C^{1,1}(\S_n)$ and $u\in C^2(\ov{\Omega})$ then it follows from \re{tre}  that Serrin's corner lemma in its general version (Lemma 2 in \cite{Se}) applies to \re{fiu} in the whole of $\Sigma$, which ends the proof of Theorem \ref{theoc1}. This fact was already observed by Reichel in his proof of the symmetry for the Monge-Amp\`ere operator in \cite{Re2}.
\end{remark}

 As a matter of fact, we only need  that $a_{1j}(0)=0$ for $j>1$, and the continuity of $a_{1j}$ in a neighbourhood of the origin. Then we can make a change of coordinates which consists of a rotation in the tangent plane to $\partial \Omega$ at $0$ diagonalizing the minor $(a_{ij}(0))_{i,j=2}^n$, and of a stretch of the coordinate vectors, so that in the new coordinates  the modified function $\tilde{w}$
satisfies
\beeq\label{fiu1}
\mathrm{tr}(\tilde{A}(x)D^2\tilde{w}) + \lsc \tilde{b}(x),D \tilde{w}\rsc+ \tilde{c}(x) \tilde{w} = 0\quad \mbox{in }\; \widetilde{\Sigma_0}
\eeq
where $\tilde{b}$, $\tilde{c}$ are bounded, $\tilde{A}$ is uniformly elliptic and continuous in $\widetilde{\Sigma_0}$, $\tilde{A}(0)= I$  and $ \widetilde{\Sigma_0}$ { still has  a right-angle corner at the origin}. Since $\tilde{A}(x)$ is continuous at the origin and $\tilde{A}(0)= I$, \re{fiu1} easily implies that for each $\varepsilon>0$ there exists $\delta>0$ such that
$$
\mathcal{M}_{\lambda, \Lambda}^-(D^2\tilde{w})-k|D \tilde{w}| -l\tilde{w}\le 0\quad \mbox{in }\; \tilde{\Sigma_0}\cap B_\delta
$$
 and
 $$
 1-\varepsilon<\lambda\le \Lambda<1+\varepsilon.
  $$
Therefore if $\ep$ is sufficiently small,  the same argument as in the proof of Theorem \ref{theoperturb} yields $\tilde{w}\equiv 0 $ in $\tilde{\Sigma_0}$. This finishes the proof of Theorem \ref{theoc1}.
\end{proof}
\medskip

\begin{remark}\label{impremark}
An important observation, which will be crucial for the proof of Theorems \ref{theon2} and \ref{theoconv} is that the above argument only requires that $F$ be $C^1$ in a neighbourhood of $(D^2u(0), |D u(0)|)$. This will be explained in more detail below.
\end{remark}

\section{The Pucci Equations}
\label{sect-pucci}

In this section we prove Theorem \ref{theon2} and Theorem \ref{theoconv}. The main idea is to reduce the proof to the argument in the preceding section, by showing that, for sufficiently many directions, if the moving plane stops at a point of orthogonality, then the Hessian of $u$ is non-singular at that point, which permits to us to use Remark \ref{impremark} and conclude.

\subsection{Towards the non-degeneracy of $D^2u$ at $Q$.}
In this subsection we prove two auxiliary results which, combined with Proposition \ref{lem1}, will help us to deduce that the matrix $D^2u$ is invertible at points of orthogonality reached by the moving planes, as explained in Section \ref{sect-movingplanes}.

The following measure-theoretic lemma contains a crucial observation.

\begin{lemma}\label{meas} Let $K(x)=\prod_{i=1}^{n-1} \kappa_i(x)$ denote the Gauss curvature of the boundary $\partial\Omega$ and $\Theta\subset\partial\Omega$ denote the set of points where at least one of the principal curvatures of $\partial \Omega $ vanishes:
$$
\Theta :=\{ x\in \partial\Omega \::\: K(x)=0\}.
$$
Then  $\nu(\Theta)$, the set of all normals at points of $\Theta$, is a negligible subset of the sphere $S^{n-1}$ (in the ($n-1$)-dimensional measure on $S^{n-1}$).
\end{lemma}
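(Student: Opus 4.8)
The plan is to view the statement as an instance of Sard's theorem applied to the Gauss map. Since $\partial\Omega$ is $C^{2,\alpha}$, the Gauss (normal) map $\nu : \partial\Omega \to S^{n-1}$ is $C^{1,\alpha}$, hence in particular $C^1$. Its differential at $x$ is, up to orientation conventions, the shape operator (Weingarten map) of $\partial\Omega$ at $x$, whose eigenvalues are exactly the principal curvatures $\kappa_1(x),\ldots,\kappa_{n-1}(x)$. Therefore the set $\Theta$ of points where at least one principal curvature vanishes is precisely the set of critical points of $\nu$: at $x\in\Theta$ the Jacobian of $\nu$ has nontrivial kernel, so $D\nu(x)$ is not surjective as a map $T_x\partial\Omega \to T_{\nu(x)}S^{n-1}$ (both spaces having dimension $n-1$). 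Consequently $\nu(\Theta)$ is exactly the set of critical values of $\nu$.

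First I would make this identification precise: choose locally a $C^2$ parametrization of $\partial\Omega$ (as a graph $x_n = h(\tau)$ over a tangent hyperplane, as in the proof of Lemma~\ref{lem1}), write the Gauss map in these coordinates, and observe that the rank of its differential drops exactly on $\Theta$. Then I would invoke Sard's theorem. The only subtlety is the regularity: the classical statement of Sard's theorem requires the map to be $C^k$ with $k \ge \max(1, m-p+1)$, where $m,p$ are the source and target dimensions; here $m = p = n-1$, so $k\ge 1$ suffices, and $\nu$ is $C^1$ (indeed $C^{1,\alpha}$) because $\partial\Omega$ is $C^{2,\alpha}$. Thus Sard applies directly and gives that the set of critical values of $\nu$, namely $\nu(\Theta)$, has $(n-1)$-dimensional Lebesgue measure zero on $S^{n-1}$.

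An alternative, more hands-on route that avoids quoting Sard at the borderline regularity: cover $\Theta$ by countably many coordinate patches, and on each patch note that $\Theta$ is the zero set of the continuous function $\det D\nu = K$; one can then estimate the image of $\Theta$ under the $C^1$ map $\nu$ by a covering argument, using that on a small cube where $|K|$ is small the image of $\nu$ is squeezed into a thin slab (because one singular value of $D\nu$ is small), yielding an arbitrarily small measure for $\nu(\Theta)$. Either way the conclusion is the same.

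I expect the main (and essentially only) obstacle to be purely bookkeeping: correctly identifying $D\nu(x)$ with the shape operator under the chosen sign conventions, and confirming that the $C^{2,\alpha}$ hypothesis on $\partial\Omega$ really does put us in the range of validity of Sard's theorem (it does, since source and target have equal dimension and $C^1$ is enough in that case). No delicate PDE input is needed here; the lemma is, as the authors note, a statement about the geometry of $\partial\Omega$ alone.
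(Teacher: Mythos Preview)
Your proof is correct. Both your argument and the paper's rest on the same key identification: the differential $D\nu$ of the Gauss map is the shape operator, so $\Theta$ is precisely the set where $\det D\nu = K$ vanishes. You then invoke Sard's theorem (legitimate here since $\nu$ is $C^1$ between manifolds of equal dimension $n-1$), whereas the paper bypasses Sard and applies the area formula directly,
\[
|\nu(A)| \le \int_A |\det D\nu(x)|\,\mathrm{d}x = \int_A |K(x)|\,\mathrm{d}x,
\]
which gives $|\nu(\Theta)|=0$ immediately since $K\equiv 0$ on $\Theta$. The two routes are essentially equivalent---in the equidimensional $C^1$ case Sard's theorem is proved by exactly this kind of Jacobian/area estimate, and your ``hands-on'' alternative is a sketch of that proof---but the paper's version is marginally more self-contained and sidesteps the need to verify Sard's regularity hypotheses.
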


\begin{proof}
Consider the unit normal map $\nu : \partial \Omega \to S^{n-1}$. It is well known that the tangent space to $\partial \Omega$ at a point $x$ coincides with the tangent space of $S^{n-1}$ at $\nu(x)$. By definition, the second fundamental form of $\partial\Omega$ at $x$ is $D\nu: T_{\partial \Omega}(x) \to T_{\partial \Omega}(x)$ and the Gaussian curvature equals $\det D\nu$.

For any set $A \subset \partial \Omega$, we have the following area formula for the measure of the image $\nu(A)$
\[ |\nu(A)| = \int_A \det D\nu(x) \ \mathrm{d}x = \int_A K(x) \ \mathrm{d}x.\]
Therefore, if the Gaussian curvature $K(x)$ vanishes in the whole of $A$, then we have $|\nu(A)|=0$.
In particular, since $K\equiv 0$ in $\Theta$ by definition, we have $|\nu(\Theta)|=0$.
\end{proof}

When a plane  moving in a direction $e$ stops in a corner situation at a point $Q$, we know that $e$ must be a tangent vector to $\partial \Omega$ at $Q$. The reason why the two dimensional case is special is because for each unit normal vector correspond only two unit tangent vectors (one opposite to each other). Thus, the previous result will help us to obtain sufficiently many directions $e$ for which the method succeeds and $\Omega$ is symmetric.
\medskip

Next we record a result pertaining to the non-degeneracy of the solution of \re{princ} in  directions normal to the boundary of $\Omega$, under the hypothesis that the function $f$ in \re{princ} is nonincreasing. This hypothesis is more restrictive than $f(0)\ge 0$, since if $f$ is nonincreasing and $f(0)< 0$ then $f$ is negative and the existence of a solution of \re{princ} contradicts the maximum principle and (H2).

The next lemma says the second normal derivative of $u$ does not vanish on the whole boundary of $\Omega$, in case $f$ is nonincreasing. So, recalling Proposition \ref{lem1} and Remark~\ref{remafterprop25},  in that case the non-degeneracy of the Hessian of $u$ at any boundary point $Q$ is equivalent to  the Gauss curvature of $\partial \Omega$ being different from zero at $Q$.

\begin{lemma}\label{unn} Assume  (H2), and that  $f$ is nonincreasing. Then for each $u$  solution of \re{princ} and each $x_0\in \partial \Omega$ we have
$$
u_{\nu\nu}(x_0)<0,
$$
where $\nu=\nu(x_0)$ is the interior  normal to $\partial \Omega$ at $x_0$.

If $u$ is not $C^2$ in a neighbourhood of $x_0$, the second derivative $u_{\nu \nu}$ is to be understood in the sense of Proposition \ref{prop-expansion-on-the-boundary}.
\end{lemma}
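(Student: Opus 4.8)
The plan is to argue by contradiction: suppose $u_{\nu\nu}(x_0)=0$ at some boundary point $x_0$, and derive a contradiction with the strong maximum principle. First I would use Lemma \ref{lem1} (applied with $A=D^2u(x_0)$ as justified by Remark \ref{remafterprop25}) to read off the full spectrum of $D^2u(x_0)$: the normal direction $\nu$ is an eigenvector with eigenvalue $u_{\nu\nu}(x_0)$, and the remaining eigenvalues are $c_0\kappa_1(x_0),\ldots,c_0\kappa_{n-1}(x_0)$. Evaluating the equation \eqref{princ} at $x_0$ in the viscosity/expansion sense gives $F(D^2u(x_0),c_0)+f(0)=0$. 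Since $\Omega$ is $C^{2,\gamma}$ and bounded, it satisfies an interior ball condition, so all principal curvatures $\kappa_i(x_0)\ge 0$; hence if $u_{\nu\nu}(x_0)=0$ the matrix $D^2u(x_0)$ is positive semidefinite, which combined with (H2) (applied to the pair $D^2u(x_0)$, $0$, and $c_0$, $0$, using $F(0,0)=0$) forces $0=F(D^2u(x_0),c_0)\ge \mathcal{M}^-_{\lambda,\Lambda}(D^2u(x_0))-kc_0\ge -kc_0$, which already pins down the sign relationship $f(0)=-F(D^2u(x_0),c_0)\le kc_0$; more usefully, since $D^2u(x_0)\ge 0$ we get $F(D^2u(x_0),c_0)\ge -kc_0$ and also $F(D^2u(x_0),c_0)\le \Lambda\,\mathrm{tr}(D^2u(x_0))+kc_0$, so $f(0)\le kc_0$. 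That alone is not a contradiction, so the heart of the argument must be a comparison/sliding argument rather than a pointwise computation.

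The real approach is the following. Because $f$ is nonincreasing, write the equation as $F(D^2u,|Du|)=-f(u)$, and note that for the constant function $m:=\max_\Omega u$ (or for suitable comparison functions) monotonicity of $f$ gives the right-signed inequalities. The cleanest route is to linearize: set $c(x)=(f(u(x))-f(0))/u(x)$ where $u\ne 0$ and $c=0$ otherwise; since $f$ is nonincreasing and $u>0$ in $\Omega$, we have $c(x)\le 0$, and $c$ is bounded (by the Lipschitz constant $l$ of $f$). Then, exactly as in the last paragraph of Section \ref{sect-movingplanes}, $u$ satisfies $F(D^2u,|Du|)+c(x)u=-f(0)\le 0$ in the viscosity sense (here $f(0)\ge 0$ because $f$ nonincreasing and $f(0)<0$ would make $f<0$, contradicting existence via the maximum principle and (H2), as noted in the paragraph preceding the lemma). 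By (H2), $u$ is then a nonnegative viscosity supersolution of the uniformly elliptic operator $\mathcal{M}^-_{\lambda,\Lambda}(D^2u)-k|Du|+c(x)u\le 0$ with $c\le 0$ and $u=0$ on $\partial\Omega$. The Hopf lemma for viscosity solutions of such operators (this is the classical smooth-boundary case of Proposition \ref{hopf}, i.e. $\beta=1$; the zero-order term with the good sign is harmless) applied at $x_0$, where $\partial\Omega$ is $C^{2,\gamma}$ and thus satisfies an interior ball condition, gives a strict interior-normal derivative estimate — but $|Du(x_0)|=c_0$ and $Du(x_0)=c_0\nu$ is already in hand, so the Hopf lemma alone is not the contradiction. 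What is needed is a \emph{second-order} Hopf-type (boundary point) estimate: since $u>0$ in $\Omega$ and $u$ is not identically zero, a refined barrier comparison near $x_0$ yields $u(x_0-t\nu)\ge c_0 t - C' t^2$ with a \emph{strict} improvement, forcing $u_{\nu\nu}(x_0)=-2\liminf_{t\to 0^+}\big(c_0 t - u(x_0-t\nu)\big)/t^2<0$.

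Concretely, I would build a barrier in a small half-ball-like region $B_r(x_0-r\nu)\cap\Omega$ adapted to the interior ball: take $\phi(x)=c_0 t(x)+\tfrac{1}{2}\mu t(x)^2$ where $t(x)=\mathrm{dist}(x,\partial\Omega)$ (smooth near $x_0$) and choose $\mu<0$ with $|\mu|$ large; using $f$ nonincreasing so that $-f(u)\ge -f(0)\ge$ a controlled bound wherever $u$ is small, one checks $\phi$ is a subsolution of the same inequality that $u$ satisfies in a small one-sided neighborhood of $x_0$, with $\phi\le u$ on the curved part of the boundary of that neighborhood (by the first-order Hopf lemma, $u$ dominates its linear part with room to spare away from $x_0$) and $\phi=u=0$ on $\partial\Omega$. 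The comparison principle for viscosity solutions then gives $u\ge\phi$, whence the second-order expansion of $u$ at $x_0$ (Proposition \ref{prop-expansion-on-the-boundary}) forces $u_{\nu\nu}(x_0)\le\mu<0$. The main obstacle, and the step deserving the most care, is constructing this barrier so that it is genuinely a viscosity subsolution of $\mathcal{M}^-_{\lambda,\Lambda}(D^2\phi)-k|D\phi|+c(x)\phi+f(0)\le 0$ near $x_0$: the curvature terms in $D^2 t$ produce an $O(1)$ contribution that must be absorbed by the large negative $\mu t^2$ term, and one must handle the zero-order term $c(x)\phi+f(0)$ — here the monotonicity of $f$ (equivalently $c\le 0$ and $f(0)\ge 0$) is exactly what makes the sign work out, since $c(x)\phi\le 0$ and the $f(0)\ge 0$ term only helps when we are proving a \emph{sub}solution inequality of the form "$\le 0$" after moving $f(0)$ to the left with the correct sign. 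I would also need to confirm, via Proposition \ref{prop-expansion-on-the-boundary} and Lemma \ref{lem1}, that this distributional/viscosity second normal derivative is the same object appearing in the statement, which Remark \ref{remafterprop25} already supplies.
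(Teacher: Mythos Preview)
Your barrier construction has the comparison running in the wrong direction. If $\phi(x)=c_0\,t(x)+\tfrac12\mu\,t(x)^2$ is a subsolution and comparison gives $u\ge\phi$ near $x_0$, then since $u$ and $\phi$ agree to first order at $x_0$ along the normal, the second-order expansion yields $u_{\nu\nu}(x_0)\ge\phi_{\nu\nu}(x_0)=\mu$, not $u_{\nu\nu}(x_0)\le\mu$. Taking $\mu$ very negative therefore gives no information. If you try to repair this by making $\phi$ a \emph{super}solution with $\mu<0$ and arguing $u\le\phi$, the construction still breaks: with $|\mu|$ large, $\phi$ turns negative for $t>-2c_0/\mu$, which is tiny, so $\phi<0<u$ on the inner part of the boundary of your comparison region and the comparison hypothesis $\phi\ge u$ fails there. (Incidentally, in your preliminary pointwise attempt, the interior ball condition does \emph{not} force $\kappa_i(x_0)\ge0$; a $C^{2,\gamma}$ boundary can certainly have negative principal curvatures and still admit interior balls.)

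The paper's argument avoids second-order barriers altogether by differentiating $u$ once and applying the ordinary Hopf lemma to the derivative. Set $v(x)=\lsc\nu_0,Du(x)\rsc$ with $\nu_0=\nu(x_0)$ fixed. Translating \eqref{princ} by $h\nu_0$, subtracting, and passing to the limit shows $v$ is a viscosity solution of $\mathcal{M}^+_{\lambda,\Lambda}(D^2v)+k|Dv|+l(x)v\ge0$ with $l(x)\le0$ (here is where $f$ nonincreasing enters). On $\partial\Omega$ the Neumann data give $v(x)=c_0\lsc\nu_0,\nu(x)\rsc\le c_0$, with equality at $x_0$. The maximum principle for $w=c_0-v$ (proper operator since $l\le0$) forces $v\le c_0$ in all of $\Omega$, and then the standard Hopf lemma at $x_0$, where $v$ attains its maximum $c_0$, yields $\partial_\nu v(x_0)<0$, which is exactly $u_{\nu\nu}(x_0)<0$. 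The monotonicity of $f$ is used precisely to make the zero-order coefficient nonpositive so that the maximum principle applies globally; no localized second-order barrier is needed.
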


\begin{proof}
Fix $x_0\in\partial \Omega$ and set $\nu_0=\nu(x_0)$. A simple, yet basic observation is that the function
$$
v(x):=\frac{\partial u}{\partial \nu_0}(x)=\lsc \nu_0,Du(x)\rsc, \quad x\in \ov{\Omega},
$$
is a viscosity solution of the inequalities
\beeq\label{ineq3}
\mathcal{M}_{\lambda, \Lambda}^-(D^2{v})-k|D v| +l(x){v}\le 0, \quad \mathcal{M}_{\lambda, \Lambda}^+(D^2{v})+k|D v| +l(x){v}\ge 0 \quad \mbox{in }\; \Omega,
\eeq
where $l(x)$ is bounded and nonpositive in $\Omega$, since $f$ is locally Lipschitz and nonincreasing.  Here we will actually use only the second inequality in \re{ineq3}.

In order to see that \re{ineq3} hold we may observe \re{princ} is satisfied by both functions $u(x)$ and $u(x+h\nu_0)$, subtract the two equations and use (H2), then divide by $h$ and note that the sequence $v_h(x):=h^{-1}(u(x+h\nu_0)-u(x))$ converges as $h\to0$ locally uniformly to $v$ on $\Omega$. Hence the stability properties of viscosity solutions with respect to uniform convergence yield \re{ineq3}.

Furthermore, for each $x\in\partial\Omega$, the Neumann boundary condition in \re{princ} implies
$$
v(x)=\lsc \nu_0,c_0\nu(x)\rsc \, \le c_0 |\nu_0||\nu(x)|=c_0.
$$
Hence we can apply the maximum principle to the function $w=c_0-v$ which satisfies
 $$
 -\mathcal{M}_{\lambda, \Lambda}^-(D^2{w})+k|D w| -c_0l(x){w}\ge -c_0l(x)\ge 0\quad \mbox{in }\; \Omega,
$$
 and deduce that $v(x)\le c_0$ in $\Omega$. Then, since
$$
v(x_0)=c,
$$
 we can apply  the Hopf lemma  and infer that
$$
\limsup_{t\searrow0}\frac{v(x_0+t\nu_0)-v(x_0)}{t}<0,
$$
which is the conclusion of Lemma \ref{unn}.\end{proof}

\begin{remark} We will not use Lemma \ref{unn} in the sequel but nevertheless include it here since it both simplifies the proof below and may turn out to be useful in the future, for instance in attempting to extend Theorem \ref{theon2} to higher dimensions.
\end{remark}

\subsection{Proofs of Theorems \ref{theon2} and \ref{theoconv}}.
We start by observing once more that in the proof of Theorem \ref{theoc1} we only used that $F$ is $C^1$ in a neighborhood of $(D^2u(Q),|D u(Q)|)$, since in that case Proposition \ref{bdaryC2alpha-local} implies that the solution $u$ belongs to the class $C^{2,\alpha}$ in a neghbourhood of $Q$ in $\ov{\Omega}$, and the rest of the proof of Theorem \ref{theoc1} is unchanged. Recall that we call $Q$ the point of orthogonality on $\partial \Omega$ where the  plane moving in the direction $e$ stops, in the sense of section \ref{sect-movingplanes}.

More precisely, going over the proof of Theorem \ref{theoc1}, we see that there we proved the following result.

\begin{prop} \label{proplocal} Assume that (H1) and (H2) hold. Assume that when applying the moving planes method in a direction $e\in S^{n-1}$, the process stops at a corner situation, with a right angle at a point $Q$. If  the operator $F$ is continuously differentiable in a neighborhood of $(D^2u(Q),|D u(Q)|)$ in $\S_n\times(0,\infty)$ then $\Omega$ is symmetric in the direction~$e$.
\end{prop}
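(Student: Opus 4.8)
The statement to be proved, Proposition \ref{proplocal}, asserts that the argument of Theorem \ref{theoc1} is entirely \emph{local} at the corner point $Q$: it only ever used the hypothesis ``$F\in C^1$'' through the assertion that $u$ is $C^{2,\alpha}$ near $Q$ and that the incremental-quotient linearization produces a linear equation with continuous coefficients in a neighbourhood of $Q$. So the proof is essentially a bookkeeping exercise: re-run the proof of Theorem \ref{theoc1} verbatim, replacing every appeal to global $C^1$-regularity of $F$ by an appeal to $C^1$-regularity of $F$ in a neighbourhood $\mathcal{N}$ of the single point $(D^2u(Q),|D u(Q)|)\in\S_n\times(0,\infty)$, and checking that this is enough.

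First I would record the reduction to the corner situation: by the discussion in Section \ref{sect-movingplanes}, under (H1)--(H2) the moving-plane process in direction $e$ either already yields symmetry in direction $e$ (case (i), closed off by the Hopf lemma applied to \re{baseineq}), or it stops at a right-angle corner at $Q$ (case (ii)); so it suffices to rule out case (ii) under the stated local hypothesis, which is exactly what the Proposition claims. Then, assuming case (ii), I would invoke Proposition \ref{bdaryC2alpha-local} with $x_0=Q$ and $P=P_Q$ the quadratic polynomial from Proposition \ref{prop-expansion-on-the-boundary}: since $|u(x)-P_Q(x)|=o(|x-Q|^2)$ and $F$ is $C^1$ near $(D^2P_Q(Q),|D P_Q(Q)|)=(D^2u(Q),|D u(Q)|)$, we conclude $u\in C^{2,\alpha}$ in a neighbourhood of $Q$ in $\overline\Omega$. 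The reflected function $v$ inherits the same local $C^{2,\alpha}$ regularity, and by (H1) $v$ solves the same equation as $u$, so $D^2v(x)=\overline{D^2u(\bar x)}$ near $Q$, and in particular $D^2v=\overline{D^2u}$ on the corner hyperplane $T_0\cap\partial\Sigma_0$.

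Next I would carry out the linearization: since both $u,v$ are $C^{2,\alpha}$ near $Q$ and their Hessians at points near $Q$ stay in the neighbourhood $\mathcal N$ where $F$ is $C^1$, the fundamental-theorem-of-calculus formula for the coefficients $a_{ij}(x)=\int_0^1\partial F/\partial m_{ij}(tD^2u(x)+(1-t)D^2v(x))\,dt$ makes sense and defines continuous coefficients; symmetrizing as in Section \ref{sect-proofC1} and using $F(M,p)=F(\overline M,p)$ gives $a_{1j}(Q)=0$ for $j>1$, hence $a_{1j}$ continuous with $a_{1j}(0)=0$ near the origin (taking $Q=0$). From here the proof of Theorem \ref{theoc1} proceeds unchanged: rotate-and-stretch coordinates so that $\widetilde A(0)=I$ while the corner stays a right angle, observe that continuity of $\widetilde A$ at $0$ forces $\mathcal{M}^-_{\lambda,\Lambda}(D^2\tilde w)-k|D\tilde w|-l\tilde w\le 0$ in $\tilde\Sigma_0\cap B_\delta$ with ellipticity constants in $(1-\varepsilon,1+\varepsilon)$, and apply the argument of Theorem \ref{theoperturb} (Lemma \ref{lemma-almostcorner} plus Proposition \ref{hopf}) to contradict the bound $w(x)\le C|x|^{2+\alpha}$ from \re{equa2}. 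This yields $\tilde w\equiv 0$, i.e.\ $w\equiv 0$ in $\Sigma_0$, hence $\Omega$ is symmetric in direction $e$.

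**Main obstacle.** There is no serious analytic difficulty here — the point is precisely that Theorem \ref{theoc1}'s proof was already ``morally local'' — so the only thing to be careful about is the domain-of-definition issue: one must check that Proposition \ref{bdaryC2alpha-local} is applicable (it needs $F$ merely $C^1$ near $(D^2P(x_0),|D P(x_0)|)$, which is exactly our hypothesis), and that once $u,v\in C^{2,\alpha}$ near $Q$ their Hessians, along the segment joining $D^2u(x)$ to $D^2v(x)$ for $x$ near $Q$, really do remain inside the neighbourhood $\mathcal N$ on which $\partial F/\partial m_{ij}$ is continuous — this follows by shrinking $\Sigma_0$, using $D^2u,D^2v\to D^2u(Q)$ as $x\to Q$ and the fact that $\overline{D^2u(Q)}$ has the same spectrum as $D^2u(Q)$ so that $(D^2v(Q),|D v(Q)|)=(\overline{D^2u(Q)},|D u(Q)|)$ also lies where $F$ is $C^1$ (by (H1), $F$ itself, hence its smoothness, is orthogonally invariant). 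After that, everything is a transcription of Section \ref{sect-proofC1}.
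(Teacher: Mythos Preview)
Your proposal is correct and follows exactly the paper's own approach: the paper simply observes that the proof of Theorem \ref{theoc1} was already local at $Q$, using Proposition \ref{bdaryC2alpha-local} in place of Proposition \ref{bdaryC2alpha} to obtain $u\in C^{2,\alpha}$ near $Q$, after which the linearization and perturbative corner argument go through unchanged. Your additional care about the segment $tD^2u(x)+(1-t)D^2v(x)$ staying in the neighbourhood where $F$ is $C^1$ is well placed and in fact resolves even more simply than you indicate, since the corner analysis of Section \ref{sect-movingplanes} already gives $D^2v(Q)=D^2u(Q)$ (not merely $\overline{D^2u(Q)}$), so both endpoints converge to the same matrix as $x\to Q$.
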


We see that the hypothesis (H3) is taylor-made for applying this proposition. In particular, this hypothesis is well suited to any extremal operator in the form
\beeq\label{extre}
\mathcal{M}_{\lambda, \Lambda}^\pm(D^2u)\pm k|D u|.
\eeq

Indeeed, the non-degeneracy of the gradient term in this operator in a neighborhood of the boundary is guaranteed by $u\in C^{1,\alpha}(\Omega)$ (recall Proposition \ref{c1alpha}) and  the Neumann hypothesis $|D u|= c_0\not = 0 $ on $\partial \Omega$. Hence the success of the moving planes method in the direction $e\in S^{n-1}$ for the extremal operator in \re{extre} is guaranteed by Proposition~\ref{proplocal} provided the Pucci operator is $C^1$ in a neighborhood of $D^2u(Q)$. By the definition of Pucci operators this is equivalent to det$D^2u(Q)\not = 0$. Here, as before, if $u$ is not a priori assumed to be in $C^2$ close to the boundary, then $D^2u(Q)$ is to be understood in the sense of Proposition \ref{prop-expansion-on-the-boundary}.

Finally, recall that the boundary of $\Omega$ is a level set both of $u$ and $|Du|$, and hence Lemma \ref{lem1} applies and shows that det$D^2u(Q)\not = 0$ is equivalent to $K(Q)\not=0$ and $u_{\nu\nu}(Q)\not = 0$, where $K(Q)=\prod_{k=1}^{n-1}\kappa_i(Q) $ is the Gauss curvature of $\partial \Omega$ at $Q$.  It now remains to check that these two conditions are met under the assumptions of Theorems~\ref{theon2} and \ref{theoconv}.

Recall that $f(0)\ge 0$. If we assume in addition that all principal curvatures of $\partial \Omega$ at $Q$ are nonnegative and  $u_{\nu\nu}(Q)=0$,   we obtain
\begeqaet
\lambda\sum_{i=1}^{n-1} \kappa_i(Q)-kc_0&=&\mathcal{M}_{\lambda, \Lambda}^-(D^2{u}(Q))-k|Du(Q)|\\
&\le& F(D^2{u}(Q), |Du(Q)|)\le -f(0)\le 0,
\eneqaet
Assume now that the mean curvature of $\partial \Omega$ at $Q$ is strictly positive. As we explained at the end of section \ref{sect-movingplanes}, by dividing the solution $u$ by a large constant we can assume that $c_0$ is as small as we like, without modifying $\lambda, \Lambda$, and $k$. The modified function satisfies an equation to which the moving planes method applies and stops at the same position as for $u$. Hence, fixing $c_0 \le (\lambda/2k)\sum_{i=1}^{n-1} \kappa_i(Q)$, the above inequality is impossible. In other words, what we just proved is that $u_{\nu\nu}$ does not vanish at points of the boundary at which all principal curvatures are nonnegative and one of them is positive.

At this point the proof of Theorem \ref{theoconv} is finished, since in this theorem we
{\it assume} that all the principal curvatures of  $\partial \Omega$ are strictly positive at all points of $\partial \Omega$.

In the end, let us conclude the proof of Theorem \ref{theon2}. We first observe that, with the notations of the moving planes method we explained in section \ref{sect-movingplanes}, we always have $k(x)\ge 0$ for all $x\in \partial \Omega\cap \partial \Sigma_{s^*}$, where $k(x)$ denotes the curvature of $\partial\Omega$ at the point~$x$. In other words, a moving plane never reaches points of negative curvature (since it would reach a point of orthogonality before reaching such points). So the moving planes method succeeds in a direction $e\in S^1$ provided the point of orthogonality on the boundary which is reached by the moving plane has non-zero curvature.

However, by Lemma \ref{meas}, we know that there is a set of directions with {\it full measure} on $S^{1}$ such that if a plane moving in one of these directions stops at a point of orthogonality $Q$, then the curvature of the boundary at $Q$  is not zero. Here we use the assumption $n=2$ which implies that to each point on the boundary there correspond only two directions which are orthogonal to the  normal at this point.

Therefore, we obtain that for a set of directions $e$ in $S^{n-1}$ with full measure, the set $\Omega$ is symmetric with respect to $e$. In particular this is a dense set of directions in $S^{n-1}$. By density, we can extend the symmetry to all directions in $S^{n-1}$ which implies that $\Omega$ is a ball.

Theorems \ref{theon2} and \ref{theoconv} are proved. \hfill $\Box$
\medskip

\begin{remark}
For dimensions higher than two, we see that the moving plane method can only fail if a plane moves in a direction $e$ which corresponds to a tangent vector at a point $Q$ with Gauss curvature zero. From
Lemma \ref{meas} we know that the normal vectors at these points form a set of measure zero in $S^{n-1}$. However, the vector $e$ belongs to the set of orthogonal vectors to $\nu(\Theta)$, which may have positive measure in $S^{n-1}$. Because of this, currently we do not know how to prove an analog to Theorem~\ref{theon2} in higher dimension and we leave it as an open problem.
\end{remark}

\section{Appendix}

\begin{proof}[Proof of Proposition \ref{hopf}]

It is a standard fact that if $\Phi : \Sigma _0 \to \C_0$ is a $C^2$-diffeomor\-phism, and $\tilde{u}(x) = u(\Phi^{-1}(x))$ then $\tilde{u}$ is a solution in $\C_0$ of the same inequality as $u$ in $\Sigma_0$, with possibly modified constants $\lambda, \Lambda, k, l$ depending only on the $C^2$-norm of $\Phi$. So, without restricting the generality we may assume that $\Sigma _0 =\C_0$.

Let $r_0=\epsilon_0/8$.  For all $r<r_0$, let $\psi_r$ be the unique solution of the problem
\begin{equation}\label{isol}
\left\{ \begin{aligned}
&\mathcal{M}_{\lambda, \Lambda}^-(D^2\psi_r)-kr|D\psi_r| -lr^2 \psi_r = 0 & \mbox{in} & \ \C\cap B_4, \\
& \psi_r=\Psi & \mbox{on} & \ \partial(\C\cap B_4),
\end{aligned} \right.
\end{equation}
where $\Psi$ is the function given by Proposition \ref{fundyex}. Note the elliptic operator in \re{isol} is proper.

By the ABP inequality (see for instance Theorem 1.7 in \cite{QS}), there exists a constant $C$ independent of $r\in (0, r_0)$ such that
$$
\sup_{\C\cap B_4} \psi_r \le C\sup_{\C\cap B_4} \Psi = C.
$$
($C$ will change from line to line but stays independent of $r$).

By the boundary Lipschitz estimates (these estimates are particularly simple to prove for our cones, since they satisfy an uniform exterior sphere condition; see for instance Proposition 4.9 in \cite{QS}) and the fact that the function $\Psi$ has bounded gradient (because $\beta\ge1$ and $\Psi$ is $\beta$-homogeneous) we infer that each directional derivative of $\psi_r$ satisfies the inequalities \re{ineq3} in $\C\cap B_4$ and is bounded at the  boundary of $\C\cap B_4$. Applying the ABP inequality to \re{ineq3} we obtain
$$
\|D\psi_r\|_{L^\infty(\C\cap B_4)}\leq C.
$$

Since
\begin{equation}\label{ineq5}
\left\{ \begin{aligned}
&\mathcal{M}_{\lambda, \Lambda}^+(D^2(\psi_r-\Psi)) \ge -kr|D\psi_r|- lr^2 \psi_r\ge -Cr,\\ & \mathcal{M}_{\lambda, \Lambda}^-(D^2(\psi_r-\Psi)) \le kr|D\psi_r|+lr^2 \psi_r\le Cr
\end{aligned}
\right.
\end{equation}
in $\C\cap B_4$ and $\psi_r-\Psi =0$ on $\partial(\C\cap B_4)$, by the ABP inequality again
$$
\|\psi_r-\Psi\|_{L^\infty(\C\cap B_4)}\le Cr.
$$
By the boundary Lipschitz estimates  applied to \re{ineq5} we  get
$$
|\psi_r(x)-\Psi(x)|\le C r|x-x_0|
$$
for each $x\in \C\cap B_4$ and $x_0\in \partial(\C\cap B_4)$.

 By the Hopf lemma,
\begin{equation*}
\Psi(x) \geq \frac{1}{C} \mbox{dist}(x, \partial \C) \quad \mbox{in } \C\cap(B_2 \setminus B_{1}),
\end{equation*}
and hence
\beeq\label{iniq}
\left|\frac{\psi_r}{\Psi}-1\right| \le Cr \quad \mbox{in } \C\cap(B_2 \setminus B_{1}).
\eeq

Next, set $w_r(x) = r^{\beta} w(rx)$ and
$$
q(r) := \inf_{\C\cap(B_{2r} \setminus B_{r}) }\frac{w}{\Psi} = \inf_{\C\cap(B_{2} \setminus B_{1}) }\frac{w_r}{\Psi}
$$
 (recall that $\Psi(rx) = r^{\beta} \Psi(x)$). We have, by \re{ineqa} and $r<\epsilon_0/4$,
 \beeq\label{ineq6}
 \mathcal{M}_{\lambda, \Lambda}^-(D^2w_r)-k|Dw_r| -lr^2 w_r \le 0 \quad\mbox{in }\; \C\cap B_4
 \eeq
  and $w_r\ge q(2r) \psi_r $ on $\partial(\C\cap B_4)$. Hence we can apply the maximum principle  to \re{isol} and \re{ineq6}, and deduce that $w_r\ge q(2r) \psi_r $ in $\C\cap B_4$. Then  \re{iniq} yields
 $$
 w_r \ge q(2r) (1-Cr)\Psi $$
 in $\C\cap (B_2\setminus B_1)$, which in particular implies
 \beeq\label{kwas}
 q(r)\ge q(2r) (1-Cr)
 \eeq
 for all small $r\in (0,1)$. An iteration argument, which we give for completeness, then implies that $q(r)$ is bounded away from zero as $r\in (0,1)$. Specifically, \re{kwas} implies that for all $k,l\in \mathbb{N}$, $k<l$,
 $$
 q\left(\frac{1}{2^{l}}\right) \ge \displaystyle\mathop{\prod}_{s=k}^l \left(1-C\left(\frac{1}{2^s}\right)\right) q\left(\frac{1}{2^{k}}\right)
 $$
 So, with $k_1$ fixed so that $C/2^{k_1-1}\le 1$, by using the inequality
 $$
 \ln(1-y)\ge -2y$$
 valid for $y\in (0,1/2)$ we obtain
 $$
 q(2^{-l})\ge e^{-Cr_1}q(r_1)
 $$
 for all $l>k_0$, where $r_1= 2^{-k_1}$.  Observe that for each $r\in (0,1)$ there exists $k\in \mathbb{N}$ such that $r\in [2^{-k},2^{-(k+1)})$ and that then the definition of $q$ implies
 $$
 q(r)\ge \min\left\{ q(2^{-k}),q(2^{-(k+1)})\right\}.
 $$
 Hence
 $$
 q(r)\ge e^{-Cr_1}q(r_1)>0
 $$
for $r\in (0,r_1)$.
\end{proof}

\section*{Acknowledgements}

Luis Silvestre was partially supported by NSF grants DMS-1065979 and DMS-1001629.

\bibliographystyle{plain}
\bibliography{overdet}
\end{document}